\address[kalfagia@math.msu.edu]{E. Kalfagianni
Mathematics Department, Michigan State
  University, E. Lansing, MI 48824, USA}
\newtheorem{theorem}{Theorem}[section]
\newtheorem{lemma}[theorem]{Lemma}
\theoremstyle{definition}
\newtheorem{remark}[theorem]{Remark}
\newtheorem{define}{Definition}[section]
\newtheorem*{question}{Question}
\newtheorem{prop}[theorem]{Proposition}
\DeclareMathOperator{\Hom}{Hom}
\def\arrowdown#1#2{\Big\downarrow \rlap{$\vcenter{\hbox{$\scriptstyle#2$}}$}
{\hbox to -10pt{\hss{$\vcenter{\hbox{$\scriptstyle#1$}}$}}}}
\newcommand{\Z}{{\mathbb{Z}}}
\newcommand{\N}{{\mathbb{N}}}
\newcommand{\Q}{{\mathbb{Q}}}
\title[Invariants of framed links in 3-manifolds]{An intrinsic approach to invariants of framed links in 3-manifolds}
\author[E.\ Kalfagianni]{Efstratia Kalfagianni}
\thanks{Supported in part by  NSF grant DMS-0805942}
\begin{document}

\begin{abstract} We study framed links in irreducible  3-manifolds that are $\Z$-homology 3-spheres or atoroidal $\Q$-homology 3-spheres.
We calculate the dual of the Kauffman skein module over the ring of two variable power series with complex coefficients.
For links in $S^3$ we give a new construction of the classical Kauffman
polynomial. 

\vskip 0.3in

 \noindent {{\bf Keywords.}} characteristic submanifold, framed links,  finite type invariants, Kauffman skein module,
loop space,
Seifert fibered 3-manifolds, toroidal decompositions.
%\end{keywords}
\vskip 0.1in

\noindent {{\bf Mathematics Subject ClassiÞcation (2010).} } 57N10, 57M2, 57R42, 57R56.
%\end{classification}
\end{abstract}

\maketitle

\section{Introduction \label{sec:intro}}
The Kauffman polynomial is a 2-variable Laurent polynomial invariant
for links in $S^3$ \cite{kau} that has interesting applications and connections with contact geometry. 
The  degree in one of the variables of the Kauffman polynomial provides an upper bound for the Thurston-Bennequin norm of Legendrian links \cite{fer, taba}. The inequality is known to be sharp for several classes of links (e.g. alternating links) and the proof of this sharpness has led to deeper connections between knot polynomials and contact geometry \cite{rath}.
\vskip 0.03in

In this paper we study framed links in oriented, irreducible 3-manifolds that are $\Z$-homology 3-spheres or atoroidal $\Q$-homology 3-spheres. We give conditions under which an invariant that is defined on framed singular links with one 
double point gives rise to an invariant of framed links (Theorem \ref{integration}). 
This allows us to construct formal power series framed link invariants
obeying the Kauffman polynomial skein
relations. The coefficients of these series are finite type framed link invariants
and are perturbative versions of the Reshetikhin-Turaev, Witten  $SO(n)$-invariants 
\cite{rt, witten} in the sense of Le-Murakami-Ohtsuki  \cite{LMO}.
Using weight systems corresponding to appropriate representations of the Lie algebras $so(n)$
and the naturallity of the LMO invariant, one obtains a Kauffman type power series invariant for framed links in all
$\Q$-homology 3-spheres.
Our approach in this paper 
is quite different from this line and allows us to solve the subtler problem of constructing
power series  invariants with given values on a set of initial links.
Our approach here, that exhibits the interplay 
between skein framed link  theory and the  topology of 3-manifolds, is inspired by the study of  Vassiliev invariants (a.k.a. finite type  invariants) \cite{vasi}
using
3-dimensional topology techniques  \cite{ka}.  The precise relation of the power series constructed here to the one obtained via the LMO invariant
is not clear to us at this point.

\begin{define} \label{framedlinks} Let $M$ be an oriented $\Q$-homology 3-sphere. A {\em framed $m$-component link} is a collection of  $m$ unordered (unoriented) circles smoothly and disjointly embedded in $M$ and such that each component  is equipped with 
a continuous unit normal vector field. Two framed links are equivalent if they are isotopic 
by an ambient isotopy that preserves the homotopy class of the vector field on each component.
Let  ${\mathcal {\bar L}}:={\mathcal {\bar L}}(M)$ denote the set of isotopy classes of framed links in $M$.
\end{define}
\begin{figure}[htbp] %
\centering
\includegraphics[width=3.2in, ]{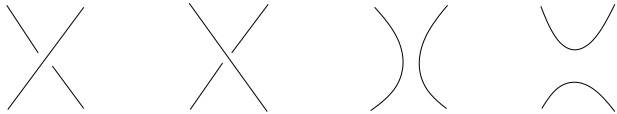} 
\caption{The parts  of $L_+$, $L_-$ and $L_o$ and $L_{\infty}$ in $B$. }
\end{figure}
\begin{figure}[htbp]
\centering
   \includegraphics[width=3.3in, ]{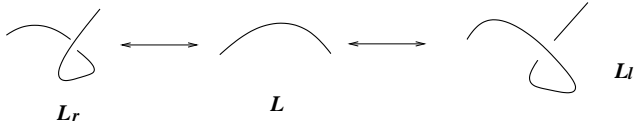} 
   
 \caption{$L_r$ and $L_l$ are obtained by a full twist from $L$. }
\end{figure}

To state the main result of the paper we need some notation and conventions:
Let  $L_+$, $L_-$, $L_o$ and  $L_{\infty}$  denote four framed links that are identical everywhere
except in a 3-ball $B$ in $M$. There under a
suitable projection of the parts in $B$,  $L_+$, $L_-$, $L_o$ and $L_{\infty}$ look as shown in Figure 1.  
Also for every framed link we denote by $L_r, L_l$ the framed links that are identical to $L$ everywhere except in a 3-ball where
they differ as shown in Figure 2.  Here we suppose that the orientation of $M$ agrees with the right-handed 
orientation of the 3-balls containing the link parts in Figures 1 and 2
and that 
the framing vector for link parts in these figures  is perpendicular to the page.
The framings of the links coincide everywhere
 outside
the parts shown in Figures 1 and 2.

Let 
$\hat \Lambda:={ \mathbb C}[[x, \ y]]$ denote the ring of formal power series in  $x,y$ over $\mathbb C$ and let ${\displaystyle t:={e}^x=1+x+{{x^2}\over {2}}+  \dots}$. 
Let us set
${\displaystyle a:= i{e}^{y}=i+iy+{i{y^2}\over {2}}+ \dots}$ and set
${\displaystyle z:= it-{(it)^{-1}}=i{e}^x+ie^{-x}= 2i+{{ix^2}}+\dots}$.
Note that  $a$ and  $z$  are invertible  in $\hat \Lambda$.
\begin{define} \label{skein}The Kauffman {\em skein module} of $M$ over $\hat \Lambda$, denoted by ${\mathfrak F} (M)$, is the quotient of the free $\hat \Lambda$-module with basis
${\mathcal {\bar  L}}$ by its ideal generated by all the relations of the following two types:
$$L_+-L_-=z\big[ L_o-L_\infty\big],$$
$$L_r=a L\  \  {\rm and} \  \  L_l=a^{-1} L.$$
 We will use ${\mathfrak F}^{*}(M):= {\Hom}_{\hat \Lambda} {\big( {\mathfrak F}(M), \hat \Lambda \big)}$ 
to denote the  $\hat \Lambda $-dual of ${\mathfrak F}(M)$.
\end{define}

\begin{remark}{\rm The usual convention in skein module theory is to allow an empty link as part of the set ${\mathcal {\bar L}}$.
In contrast to that, in this paper, we find it convenient to work with non-empty links (Definition \ref{framedlinks}).}
\end{remark}
\begin{remark} Since the links are unoriented 
the declarations $L_+$ and $L_-$, when considering a crossing,  are arbitrary. However this doesn't matter for our purposes since the first skein relation in Definition
\ref{skein} is invariant under simultaneously interchanging $L_+$ with $L_-$ and $L_o$ with $L_{\infty}$.
\end{remark}

To continue let $\pi:= {\pi}(M)$ denote the set of  non-trivial conjugacy classes of $\pi_1(M)$  and
${\hat \pi}$ denote the set obtained from $\pi$ by indentifying
the conjugacy class of every element  $1\neq x\in \pi_1(M)$ with that of $x^{-1}$.
Also let
$S( {\hat \pi})$ denote the symmetric algebra of the free $\hat \Lambda$-module, say  $\hat \Lambda \hat \pi$, with basis $\hat \pi$.
Finally, let
$S^{*}({\hat \pi}):= {\Hom}_{\hat \Lambda}\big( S({\hat \pi}),  \hat \Lambda \big)$
denote the $\hat \Lambda $-dual of $S( {\hat \pi})$.

\begin{theorem}\label{main} Let $M$ be an oriented
$\Q$-homology 3-sphere with  
$\pi_2(M)=0$ and such  that if   $H_1(M)\neq 0$ then  $M$ is atoroidal. 
Then there is  a $\hat \Lambda$-module isomorphism
$${\mathfrak F}^{*}(M) \cong S^{*}({\hat \pi}).$$
\end{theorem}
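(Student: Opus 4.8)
The plan is to exhibit the isomorphism as the dual of an explicit ``standard form'' map and to show that the only obstruction to inverting it is controlled by Theorem~\ref{integration}. Fix, once and for all, a framed link $K_w$ for each monomial $w$ in the canonical basis of $S(\hat\pi)$: for a product $w=c_1\cdots c_k$ of (inversion classes of) non-trivial conjugacy classes, let $K_w$ be a split union of framed representatives, one in each free homotopy class $c_j$ (recall that the free homotopy class of an unoriented circle in $M$ is a conjugacy class taken up to inversion, i.e. a non-trivial element of $\hat\pi$), each carrying a fixed reference framing, and let $K_1$ be the unknot. Extending $\hat\Lambda$-linearly gives a module map $\beta\colon S(\hat\pi)\to{\mathfrak F}(M)$, and I would take $\Psi:=\beta^{*}\colon{\mathfrak F}^{*}(M)\to S^{*}(\hat\pi)$, which sends a Kauffman functional $f$ to the assignment $w\mapsto f(K_w)$. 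This $\Psi$ is manifestly a well-defined $\hat\Lambda$-module homomorphism; the whole content of the theorem is that it is an isomorphism.

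To build the inverse I would set up a reduction of an arbitrary framed link to the $K_w$. First use the framing relations $L_r=aL$, $L_l=a^{-1}L$ to put every component in its reference framing, absorbing the discrepancy into powers of the unit $a$; a split null-homotopic circle is removed at the cost of a universal unit $\delta\in\hat\Lambda$ (its skein value). Next, use $L_+-L_-=z\,[\,L_o-L_\infty\,]$ to trade each crossing change for the $z$-multiple of a difference of smoothings. Since a crossing change is a homotopy of the relevant component(s) it preserves the multiset of conjugacy classes, while the two smoothings lower the crossing number; ordering the moves by a suitable complexity (crossing number, refined by linking data) makes the process terminate, so every $L$ becomes a \emph{finite} $\hat\Lambda$-combination of the $K_w$. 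Defining $\Phi(g)(L)$ to be the value of $g$ on this combination yields a candidate map $\Phi\colon S^{*}(\hat\pi)\to{\mathfrak F}^{*}(M)$. Granting that $\Phi$ is well defined and $\hat\Lambda$-linear, one gets $\Psi\circ\Phi=\mathrm{id}$ because each $K_w$ is already in standard form, and $\Phi\circ\Psi=\mathrm{id}$ because the reduction is an identity in ${\mathfrak F}(M)$, so the $K_w$ span and any Kauffman functional is determined by its values on them.

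The crux, and the step I expect to be the main obstacle, is the well-definedness of $\Phi$: the reduction involves choices (which crossings to change, in what order, and which isotopies realize the passage to $K_w$), and two reductions of the same $L$ differ by a loop in the space of framed, possibly singular, links. The resulting ambiguity in $\Phi(g)(L)$ is exactly the quantity measured on one-parameter families passing through double points, so this is where Theorem~\ref{integration} applies: it gives a criterion, to be verified inductively on the number of double points (equivalently on the finite-type degree), under which a coherent rule on framed singular links with one double point integrates to a genuine framed link invariant obeying the Kauffman relations. Verifying its hypotheses amounts to showing that the relevant obstruction vanishes, and I expect this to consume the bulk of the work. The topological assumptions enter precisely here: $\pi_2(M)=0$ kills the two-dimensional obstruction classes coming from essential spheres; irreducibility together with the $\Q$-homology-sphere condition controls the homology of the relevant configuration and loop spaces; and when $H_1(M)\neq0$ the atoroidal hypothesis is needed to rule out the extra identifications among link types produced by essential tori, equivalently by the Seifert-fibered pieces of the characteristic submanifold of $M$, which would otherwise either collapse distinct monomials of $S(\hat\pi)$ or obstruct the integration.

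Finally I would check that $\Phi$ and $\Psi$ are mutually inverse $\hat\Lambda$-linear maps. The normalizations by the units $a$ and $\delta$ do not affect the isomorphism type, and the algebra structure of $S(\hat\pi)$ merely records split unions $K_{w_1w_2}=K_{w_1}\sqcup K_{w_2}$, so $\Psi$ carries the basis of standard links to the monomial basis of $S(\hat\pi)$. Dualizing this identification gives ${\mathfrak F}^{*}(M)\cong S^{*}(\hat\pi)$, as required; the special case $\pi_1(M)=1$ recovers the statement that ${\mathfrak F}^{*}(S^3)\cong\hat\Lambda$ is generated by a single functional, the classical Kauffman polynomial.
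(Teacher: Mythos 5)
Your forward map (restriction of a Kauffman functional to the standard links $K_w$) is exactly the paper's, and your identification of Theorem \ref{integration} as the engine behind the inverse is correct in spirit. The genuine gap is in how you construct that inverse. You propose to reduce an arbitrary framed link to a \emph{finite} $\hat\Lambda$-combination of the $K_w$ by trading crossing changes for smoothings and invoking ``a suitable complexity (crossing number, refined by linking data)'' to force termination. In a general 3-manifold there is no global diagram and hence no crossing number; a generic homotopy from $L$ to $K_w$ yields finitely many crossing changes, but each one introduces smoothed links $L_o,L_\infty$ whose components lie in new free homotopy classes and which must themselves be reduced, and you exhibit no quantity that decreases under this recursion. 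Your claim amounts to asserting that the $K_w$ span the skein module ${\mathfrak F}(M)$ itself --- a statement the paper deliberately does not make (its Laurent-polynomial analogue is left open as Question \ref{convergence}, and the spanning argument is carried out only for $S^3$ in Proposition \ref{poly}, where diagrams on $S^2$ supply the needed complexity). This is precisely why the theorem computes the dual ${\mathfrak F}^{*}(M)$ rather than ${\mathfrak F}(M)$.

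The paper's route around this is the power-series structure, which your proposal does not use. With the skein coefficient written as $t-t^{-1}=2x+\cdots$ (divisible by $x$), the extension of a functional on $\mathcal C \mathcal L^* \cup \{U\}$ is built coefficient by coefficient: the degree-$m$ part of $R_n(L_+)-R_n(L_-)=(t-t^{-1})\,[\,R_n(L_o)-R_n(L_\infty)\,]$ depends only on the already-constructed $v_n^0,\dots,v_n^{m-1}$, so it defines a singular link invariant on ${\mathcal L}^{(1)}$; one verifies the four-term condition (3) and integrates via Theorem \ref{integration} to obtain $v_n^m$ (Theorem \ref{main1}, then Theorem \ref{main2} by a change of variables). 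This order-by-order scheme both breaks the circularity of needing values on the non-standard links $L_o,L_\infty$ before the invariant exists, and removes any need for a terminating reduction. Without it your map $\Phi$ is simply not defined, so the remainder of your argument cannot be completed as written.
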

%\vskip 0.03in

For components that are homologically trivial in $M$ 
the homotopy class of the framing vector field is determined by an integer: the algebraic intersection number of
a push-out of the component in the direction of the framing vector field with a Seifert surface bounded by the component.
This algebraic intersection number is the self-linking number of the component.
There is a canonical framing defined by the Seifert surface that corresponds to the integer zero.
This implies that in  a $\Z$-homology sphere, for every underlying (unframed) isotopy class of knots the framed knot types correspond to integers.
The self-linking number can also be defined
in terms of Vassiliev-Gusarov axioms; it is a finite type framed link invariant of order one.  As shown by Chernov \cite{chernov} this point of view generalizes
to all framed knots in 3-manifolds; in particular for knots in irreducible $\Q$-homology 3-spheres that we study here.
For $M$ as above,  given a conjugacy class $c$  in $\pi_1(M)$ and a fixed framed knot  $CK$ representing $c$,
Chernov shows that there is a unique $\mathbb Z$-valued invariant for all framed knots representing $c$
with given value on $CK$ (Theorem 2.2  of \cite{chernov}).
His work implies that, with a chosen set of initial knots, for every underlying (unframed) isotopy class of knots
the framed knot types correspond to integers. This point will be useful to us in the next sections.
\vskip 0.06in

The isomorphism in Theorem \ref{main} also depends on a choice of initial links which we now discuss:
For every unordered sequence of elements in $\hat\pi\cup\{1\}$ we choose a framed link $CL$ that realizes it  and call it an
{\em initial link}. For  elements in $\hat\pi\cup\{1\}$ that are trivial in $H_1(M)$ we choose the canonical framing. This means that the integer describing the framing on each component
of an initial link is zero. For an initial link $CL$ with $k$ homotopically
trivial components we choose $CL=CL^*\sqcup U^k$, where 
$CL^* $ is an initial link with no homotopically trivial components and $U^k$
is the standard unlink in a 3-ball 
disjoint from $CL^*$. The one component unlink $U^1$ will be abbreviated to $U$. 
In general we will assume that each component of an initial link $CL$ is the chosen initial knot for the corresponding element in $\hat\pi\cup\{1\}$. 
We will also assume that each component is the initial knot required to define Chernov's self-linking invariant.
We will denote by $\mathcal C \mathcal L^*$ the set of all
initial 
links with no homotopically trivial components. 
%\vskip 0.04in

The elements in the  set  $\mathcal C \mathcal L^* \cup \{U\}$ are in one-to-one correspondence with a basis of $S({\hat \pi})$.
An element  $R_M\in {\mathfrak F}^{*}(M)$ gives rise to one in $S^{*}({\hat \pi})$ by restriction on the set 
$\mathcal C \mathcal L^* \cup \{U\}$. Theorem \ref{main} will follow easily once we have proven the following result (see Section 4 for details).
\begin{theorem}\label{main2} Let $M$ be an oriented
$\Q$-homology 3-sphere with  
$\pi_2(M)=0$, and such that if   $H_1(M)\neq 0$ then $M$ is atoroidal. Given
a map ${\mathcal R}_M: \mathcal C \mathcal L^* \cup \{U\} \rightarrow\hat \Lambda$ there exists a unique
map $R_M: {\bar {\mathcal L}}\rightarrow\hat \Lambda$ such that:
\begin{enumerate}
\item The  restriction of $R_M$ on ${\mathcal C{ {\mathcal L^*}}}\cup \{ U\}$ is equal to ${\mathcal R}_M$.
\item $ R_M$ satisfies the Kauffman
skein relation
$$R_M(L_+)-R_M(L_-)=z\big[ R_M(L_o)-R_M(L_\infty)\big],$$
for every skein quadruple  of links $L_+$, $L_-$, $L_o$ and $L_{\infty}$ as in Figure 1.
 \item $R_M(L_r)=aR_M(L)$ and $R_M(L_l)=a^{-1}R_M(L)$ for every $L\in {\bar{\mathcal L}}$.
 \end{enumerate}
\end{theorem}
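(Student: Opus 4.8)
The plan is to deduce Theorem \ref{main2} from the single assertion that, modulo the relations in Definition \ref{skein}, every framed link is a $\hat\Lambda$-combination of the chosen initial links, and that no nontrivial such combination is forced to vanish; uniqueness of $R_M$ is immediate from the first (spanning) half, while existence requires the second (consistency) half, which is where Theorem \ref{integration} does the work. For the spanning step I would argue by induction on a well-founded complexity. Given a framed link $L$, first apply relation (3) to bring the framing of each component to that of the corresponding initial knot; the number of full twists needed is detected by Chernov's self-linking invariant (Theorem 2.2 of \cite{chernov}), and each twist costs only a unit factor $a^{\pm1}$. Next, use relation (2) to realize, by crossing changes, a homotopy of the underlying unframed link onto the chosen initial representative of its free-homotopy class: at each crossing change $R_M(L_+)$ is rewritten via $R_M(L_-)$ together with $z\bigl[R_M(L_o)-R_M(L_\infty)\bigr]$, and the smoothings $L_o,L_\infty$ are strictly simpler (fewer components, or a homotopy class lying lower in the induction). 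Since the classes in $\hat\pi$ exhaust the free-homotopy types and each has a fixed initial representative, the recursion terminates on $\mathcal{CL}^*\cup\{U\}$, proving that any $R_M$ satisfying (1)--(3) is determined by $\mathcal R_M$.

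For existence I would invoke Theorem \ref{integration} and build $R_M$ one $\hat\Lambda$-filtration degree at a time. At each stage the already-constructed lower-degree part of $R_M$ determines, through the right-hand side $z\bigl[R_M(L_o)-R_M(L_\infty)\bigr]$ of the skein relation and the framing factors $a^{\pm1}$, a candidate value $D$ on framed singular links carrying a single double point, i.e.\ on the codimension-one walls of the space of framed links. Theorem \ref{integration} then integrates $D$ up to an honest invariant of framed links, matching the prescribed skein and framing jumps, provided $D$ satisfies the consistency hypotheses of that theorem. Feeding in the base data on $\mathcal{CL}^*\cup\{U\}$ gives condition (1), and the induction on degree closes once the consistency of $D$ is established at every stage; the uniqueness from the previous paragraph then identifies this $R_M$ as the unique map with properties (1)--(3).

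The hard part will be verifying these consistency conditions, namely that the jump data $D$ has trivial monodromy around the codimension-two strata of the space of framed links. I would organize them into three types. First, two independent double points: here the two modifications occur in disjoint $3$-balls, so the corresponding operations commute and consistency is automatic from locality. Second, the triple-point (Reidemeister-III) walls and the compatibility of crossing changes with the framing relation (3): these are checked directly from the local models of Figures 1 and 2, using that $z$ and $a$ are central units in $\hat\Lambda$. Third, and genuinely global, are the walls where a component is homotoped around a loop in $M$ and brought back to itself; the monodromy of such a homotopy is governed by the topology of the free loop space of $M$, and this is exactly where the hypotheses on $M$ must be spent. The condition $\pi_2(M)=0$ kills the contribution of homotopies that sweep out essential $2$-spheres, while, in the case $H_1(M)\neq0$, atoroidality rules out the essential tori swept out by annular homotopies (through the characteristic Seifert-fibered submanifold), so that no nontrivial monodromy survives. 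Establishing this torus-free monodromy vanishing is, I expect, the main obstacle; once it is in place the hypotheses of Theorem \ref{integration} are met, the inductive construction of $R_M$ goes through, and Theorem \ref{main} follows by dualizing the resulting free basis $\mathcal{CL}^*\cup\{U\}$ of $\mathfrak F(M)$.
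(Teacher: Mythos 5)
Your overall architecture (reduce to initial links for uniqueness, integrate singular-link jump data degree by degree via Theorem \ref{integration} for existence) matches the paper's, but there is a concrete gap in the inductive engine. You propose to run the induction directly on the two-variable relation, with the jump data $D(L_{\times})=z\bigl[R_M(L_o)-R_M(L_\infty)\bigr]$ ``determined by the already-constructed lower-degree part of $R_M$.'' This fails because $z=ie^{x}+ie^{-x}=2i+ix^2+\cdots$ is a \emph{unit} in $\hat\Lambda$ (the paper notes this explicitly): the degree-$m$ part of $D$ involves the degree-$m$ parts of $R_M(L_o)$ and $R_M(L_\infty)$, which are exactly what you have not yet constructed, and the smoothings are not ``strictly simpler'' links in any well-founded sense (smoothing a self-crossing \emph{increases} the number of components, so your complexity for the uniqueness step has the same problem). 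The paper circumvents this by first proving Theorem \ref{main1}: for each fixed $n\in\Z$ it builds a one-variable series $R_n(x)$ whose skein coefficient is $t-t^{-1}=2x+O(x^3)$, which is divisible by $x$; only then is the singular invariant $R_n^{(m)}(L_\times)=(t-t^{-1})[R_n^{(m-1)}(L_o)-R_n^{(m-1)}(L_\infty)]$ genuinely determined by the previously constructed coefficients $v_n^0,\dots,v_n^{m-1}$, and Theorem \ref{main2} is then obtained from the family $\{R_n\}_{n\in\Z}$ by the substitution $a=ie^{y}$, $y=(n+1)x$. Without some such reparametrization your induction does not get off the ground.

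Two further points. First, you misallocate the work in invoking Theorem \ref{integration}: its only hypotheses are torsion-freeness of $\mathbb A$ and the \emph{local} two-double-point relation (3); the global monodromy vanishing around essential tori and loops in $\mathcal M^L(P,M)$ is the \emph{content} of that theorem (Sections 2--3 of the paper), already proved, so your ``main obstacle'' is not an obstacle at all. Conversely, the step you dismiss as ``automatic from locality'' is the one substantive verification at each stage: one must expand $R_n^{(m)}(L_{\times+})-R_n^{(m)}(L_{\times-})$ using the inductive skein relation and check that the resulting expression in $L_{oo},L_{o\infty},L_{\infty o},L_{\infty\infty}$ is symmetric in the two double points; this uses the inductive hypothesis, not just disjointness of the two $3$-balls. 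Second, Theorem \ref{integration} is stated for \emph{oriented} framed links while Theorem \ref{main2} concerns unoriented ones, so one must also show the constructed invariants are independent of orientation (part (2) of Theorem \ref{main1} and the final paragraph of its proof); your proposal does not address this.
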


Let $\Lambda:={\mathbb C} [a^{\pm1},z^{\pm1}]$ denote the ring of Laurent polynomials in $a$
and $z$.  
We can define the Kauffman {\em skein module} of $M$ over $ \Lambda$, denoted by ${\mathfrak F}_{ \Lambda} (M)$, 
and consider its $ \Lambda$-dual, ${\mathfrak F}_{\Lambda}^{*}(M):= {\Hom}_{ \Lambda} {\big( {\mathfrak F}_{\Lambda}(M),  \Lambda \big)}$.
As we will discuss in Section 4, for links in $S^3$,
if we choose the value $R_{S^3}(U)$ to lie in $\Lambda$
then $R_{S^3}(L)\in \Lambda$, for every $L\in{\bar {\mathcal L}}$. This implies that 
 ${\mathfrak F}_{\Lambda}^{*}(S^3)\cong \Lambda$ and leads to the following question:

\begin{question} \label{convergence} Let $M$ be as in Theorem \ref{main}. Can we choose the initial links  $CL^*\in\mathcal C\mathcal L^*$
so that we have a $\Lambda$-module isomorphism
$${\mathfrak F}_{\Lambda}^{*}(M) \cong S_{\Lambda}^{*}({\hat \pi})?$$
Here, $S_{\Lambda}^{*}({\hat \pi})$ denotes the $ \Lambda$-dual of 
the symmetric algebra of the free $\Lambda$-module with basis $\hat \pi$.
\end{question}

In \cite{kalin} we constructed formal power series invariants that satisfy the HOMFLY skein change formula
for unframed oriented  links in large classes of $\Q$-homology 3-spheres. Cornwell  \cite{cr1, cr2, cr3} shows that for  lens spaces both the question above
and its analogue for the HOMFLY skein module of  \cite{ka2} have a positive answer.  
As a result he obtains analogues of the aforementioned results of 
\cite{fer, taba} for Legendrian links in contact lens spaces.
\vskip 0.04in

Theorem \ref{integration} of this paper is the framed link analogue of the ``integrability of singular link invariants"
results proved in \cite{ka,kalin}. Theorem  \ref{integration}  
doesn't follow from the results in these papers: In \cite{ka} we only treat knots
while in \cite{kalin} we treat links in some classes of irreducible $\mathbb Z$-homology 3-spheres.
In this paper we are able to  remove those restrictions and deal with all irreducible 
$\mathbb Z$-homology 3-spheres; see Theorem \ref{integration2} and Remark \ref{necessary}. 
If one forgets the framing, Theorem \ref{integration2}
generalizes the integrability results and Theorem A
of \cite{kalin} for links in all irreducible $\mathbb Z$-homology 3-spheres.
\vskip 0.04in

Framed links in general 3-manifolds and their skein modules were studied by several authors before; see \cite{jozef1} and references therein. 
In particular,
Przytycki \cite{jozef} introduced a two term homotopy skein module of framed links in oriented 3-manifolds as quantum deformation of 
the fundamental group.
In \cite{kaiser} Kaiser  calculated
this module over the ring of Laurent polynomials with $\Z$-coefficients.
He showed that if a 3-manifold contains no non-separating 2-spheres or tori
then Przytycki's module is a symmetric algebra of the free module with basis the set of non-trivial conjugacy
classes of $\pi_1(M)$.
Kaiser also studied several variations of two term skein modules and put
the classical self-linking number for null homologous knots as well as  Chernov's 
generalization of it in the skein module theory framework. For details the reader is referred to \cite{kaiser}.

The paper is organized as follows: In Section 2 we formulate the problem of integrating framed singular link invariants to invariants of framed links. Then we state an  integrability theorem and prove it for atoroidal $\Q$-homology spheres. In Section 3 we treat manifolds containing essential tori and in Section 4 we construct the Kauffman power series 
invariants and prove Theorems \ref{main} and \ref{main2}.

Throughout the paper we will work in the smooth category. 
\medskip

{{\bf Acknowledgment:}} I thank Chris Cornwell for his interest in this work and for several stimulating questions about link theory
in 3-manifolds that motivated me to go back and work on this project. I thank Vladimir Turaev for suggesting that I formulate the  main result of the paper in terms of skein modules. I am grateful to 
the anonymous referees for reading the paper carefully and making thoughtful comments and suggestions
that helped me improve the exposition.
%\medskip

\section{ Framed oriented Singular Link Invariants} 
Throughout this section we will work with oriented links  in oriented 3-manifolds. Theorem \ref{integration}, as well as its unframed counterparts \cite{ka, kalin, lin}, are proved for oriented links in oriented 3-manifolds.
For example,  the definitions of the signs of resolutions of double points below use 
the orientation of links as well as that of the ambient 3-manifolds. 
%We note that since we work with  $\Q$-homology 3-spheres the assumption of orientation on the ambient manifolds can always be satisfied.

\subsection{Framed oriented singular links and resolutions}   Let $M$ be an oriented $\Q$-homology 3-sphere.
An $m$-component oriented {\em framed singular link} of order $n$ is a collection of unordered
oriented circles, smoothly immersed in $M$ such that (i) the only
singularities are
exactly $n$ transverse double points; and (ii)
the image of each component is equipped with 
a continuous unit normal vector field.  
We consider  framed singular links up to 
ambient isotopy that preserves the orientations, the  transversality of the double points and the homotopy class of the vector field on each component.
For $n=0$ we have an oriented framed link. We will denote by ${\mathcal L}^{(n)}:={\mathcal L}^{(n)}(M)$ (resp. ${\mathcal L}:={\mathcal L}(M)$) the set of isotopy classes of  oriented framed singular links of order $n$ (resp. links)
in $M$.

\smallskip
{\underline {\bf Convention:}} To simplify the exposition, for the remaining of the section and the next section, we will say  a framed link (resp. singular link) to mean an oriented framed link (resp. singular link). Also when we say a 3-manifold, we will mean an oriented 3-manifold.
\smallskip

Let $P$ denote a disjoint union of oriented circles and
consider a framed singular link represented by a smooth immersion
$L: P\longrightarrow M$.
Let $p\in M$ be a double point of $L$;
the inverse image consists of two points $p_1, p_2\in P$.
There are
disjoint intervals $\sigma_1$ and $\sigma_2$ on $P$ with $p_i
\in\hbox{int}(\sigma_i)$, $i=1, \ 2$, such that for a neighborhood $B$ of $p$ we have
$ L \cap B= L(\sigma_1)\cup L(\sigma_2)$.
Moreover, there is a proper 2-disc $D$ in $B$ such that $L(\sigma_1)$, $L(\sigma_2) \subset D$ intersect transversally at $p$. 
Now $L(\sigma_1)\cup L(\sigma_2)$ intersects $\partial D$
at four points  and, since $\sigma_i$
inherits an orientation from that of $P$, we can talk of the initial 
and terminal point of  $L(\sigma_i)$. Choose arcs $a_1$, $a_2$,
$b_1$, $b_2$ with disjoint interiors such that
\begin{enumerate}
 \item  $a_1$ and $a_2$ go from the initial point of $L(\sigma_1)$
to the terminal point of $L(\sigma_1)$ and lie in distinct components
of $\partial B \setminus \partial D$; and
 
 \item  $b_1$ and $b_2$ lie on $\partial D$ with $b_1$ going from
the initial point of $L(\sigma_1)$
to the terminal point of $L(\sigma_2)$ and $b_2$
from the initial point of $L(\sigma_2)$ to the terminal point of
$L(\sigma_1)$.  The complement of $b_1\sqcup b_2$
in $\partial D$ consists of two arcs, say $c_1, c_2$.
\end{enumerate}
The orientation of $M$ and that of $L(\sigma_2)$ define an orientation of $a_1\sqcup a_2$;
suppose that this induced orientation agrees with the one of $a_1$ and is opposite to that of $a_2$.
Define the positive resolution of $L$ at $p$ to be 
$$L_+=\overline {L\setminus L(\sigma_2)} \cup a_1,$$
and the negative resolution to be 
$$L_-=\overline {L\setminus L(\sigma_2)} \cup a_2.$$
In the case that $n=1$ we also define
$$L_o=\overline {L\setminus ( L(\sigma_2)\cup L(\sigma_1) )}\cup (b_1\sqcup b_2)$$
$$L_{\infty}=\overline {L\setminus ( L(\sigma_2)\cup L(\sigma_1) )}\cup (c_1\sqcup c_2)$$
Note
that $L_{\infty}$
only makes sense as an unoriented link.

\begin{define} \label{admissible} A framed singular link $L$  is called {\em inadmissible}
if  there is a 2-disc $D\subset M$
such that $L\cap D=\partial D$ and exactly  one double point
of $L$ lies on $\partial D$. Otherwise the singular link is called {\em admissible}. 
A crossing change on a link that produces an inadmissible singular link as intermediate step will be called  an
{\em inadmissible crossing change}.
\end{define}

In the proof of Theorem \ref{integration} it will be convenient for us to work
with framed links with ordered components: Let ${\tilde {\mathcal L}}$ denote the set of isotopy classes of such
framed links in $M$. Similarly, let ${\tilde {\mathcal L}}^{(n)}$ denote the set of isotopy classes of ordered 
framed singular links with $n$-double points.
There is an obvious map ${\mathfrak r}: {\tilde {\mathcal L}} \longrightarrow {\mathcal L}$ that forgets the 
ordering of the components
of  links; similarly we have forgetful maps  ${\mathfrak r}_n: {\tilde {\mathcal L}}^{(n)} \longrightarrow {\mathcal L}^{(n)}$, for all $n\in {\N}$.
Recall from the Introduction that the framing of a knot is determined by an integer, where in the case of not homologically trivial knots
this integer is provided by Chernov's work. Thus the framing
of an $m$-component link in ${\tilde {\mathcal L}}$ is determined by  an ordered
sequence  $\{{\bf f}_1, \ldots, {\bf f}_m\}$ of $m$ integers; one assigned to each component of the link. Every entry of the sequence is the affine self-linking number of a link component 
and it changes by $2$ under an inadmissible crossing change while it remains unchanged under admissible crossing changes (Theorems 2.2, \cite{chernov}).
Then, via ${\mathfrak r}$, an unordered link $L\in {\mathcal L}$ inherits an unordered sequence of integers: More specifically,
given $L\in {\mathcal L}$, there is a set of ordered integer sequences, say ${\bf f}$,  corresponding to elements in ${\mathfrak r}^{-1}(L)$.
We assign to $L$ the map 

$${\mathfrak r}^{-1}(L)\longrightarrow {\bf f},$$ 
sending each element to its corresponding ordered sequence.
We will often abuse the terminology and refer to ${\bf f}$
as the {\em framing} of the link $L$. 
\begin{define}\label{total} {\rm The  {\em total framing} of a link  $L\in {\mathcal L}$ is defined to be  $ {\tau}(L):= \sum_{i=1}^m {\bf f_i}$
where $\{{\bf f}_1, \ldots, {\bf f}_m\}$ is the ordered sequence corresponding to an appropriate lift
${\tilde {L}}\in {\mathfrak r}^{-1}(L)$ of $L$.}
\end{define}

\begin{define} \label{extend}For an ordered, framed singular link ${\tilde L}_{\times}\in {\tilde{\mathcal L}}^{(1)}$ 
we define a sequence of integers $\{{\bf f}_1, \ldots, {\bf f}_m\}$  by

$$ {\bf f}_i({\tilde L}_{\times}):= \; 
\left\{ 
\begin{array}{cl}{\bf f}_i({\tilde L_{+}})-{\bf f}_{i}({\tilde L}_{-}), 
  \mbox{ if} \quad  \times \in {\tilde L}_i \\
  \mbox  \quad  \quad \quad \quad \\
  {\bf f}_i({\tilde L_{+}}) ={\bf f}_i({\tilde L_{-}}), \mbox   \quad    {\rm otherwise.}\\
\end{array}
\right.
$$
Note that, in the first case, $ {\bf f}_i({\tilde L}_{\times})$ is non-zero only if $L_{\times}$ is inadmissible, in which case it is equal to 2.
For an unordered singular link  $L_{\times}\in {{\mathcal L}}^{(1)}$ we have 
a set of ordered integer sequences, say  ${\bf f}$,  corresponding to elements in ${\mathfrak r}^{-1}(L_{\times})$.
The map ${\mathfrak r}^{-1}(L_{\times})\longrightarrow {\bf f}$, assigning to every ordered link in that preimage its corresponding sequence, gives an unordered sequence of integers for $L_{\times}$.
\end{define}

\subsection{Integration of singular link invariants.}
Given an abelian group  $\mathbb A$
and a framed link invariant
$F: {\mathcal L} \longrightarrow \mathbb A$, we
can extend it to an invariant of framed singular links by defining
$$f(L_{\times}):= F(L_+)-F(L_-), \eqno(1)  $$
for every  $L_{\times}\in {\mathcal L}^{(1)}$. Continuing inductively we can extend the invariant on singular links in
${\mathcal L}^{(n)}$ for all $n\in {\bf N}$. 
We are interested in reversing this process; the reverse process is usually referred to as integration of the singular link invariant to an invariant of links
\cite{bn, ka, kalin, lin}.
In this section we deal with the following question: Suppose that we are given an invariant of framed singular links
$f: {\mathcal L}^{(1)} \longrightarrow \mathbb A$.
Under what conditions is there
a framed link invariant $F:{\mathcal L} \longrightarrow \mathbb A$ so that
(1) holds for all singular links $L_{\times} \in {\mathcal L}^{(1)}$?  We will address this question for links in 
$\Q$-homology $3$-spheres
with trivial $\pi_2$.

\begin{define}Let $N$ be an oriented compact  3-manifold with or without boundary.
A map $ \Phi:S^1\times S^1 \longrightarrow N$ is called
{\em essential} if it induces an injection on $\pi_1$ and it cannot be homotoped to a map 
$\Phi': S^1\times S^1 \longrightarrow \partial N$. Otherwise $\Phi$ is called {\em inessential}.
The manifold $N$ is called {\em atoroidal} if there are no {\em essential}
maps  $S^1\times S^1 \longrightarrow N$.
\end{define}

\begin{remark} \label{relation2}{\rm  
Let $L_{\times \times}\in{\mathcal L}^{(2)}$ be a framed singular link with two inadmissible singular points. By resolving the singular points, one at a time,
we obtain four singular links in ${\mathcal L}^{(1)}$.
These are shown in Figure 3, where the notation is consistent with that of Figure 2.
\begin{figure}[htbp]
\centering
\includegraphics[width=2.0in, ]{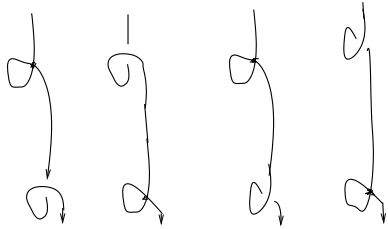} 
 \caption{From left to right: $L_{\times r}$, $L_{r\times }$, $L_{\times l}$,
$ L_{l \times }$}
\end{figure}
We note that $L_{{\times}r}$ is equivalent to $L_{{r\times}}$.
Similarly, $L_{{\times}l}$ is equivalent to $L_{{l\times}}$. Thus if
$f: {\mathcal L}^{(1)} \longrightarrow {\mathbb A}$ is an invariant of framed singular links
then we have
$$f(L_{\times r})=f(L_{r \times})\ {\rm and}\  f(L_{\times l})=f(L_{l \times}). \eqno(2)$$
Now (2) implies that the signed sum of $f$ on the four singular links in Figure 3, where signs are determined by (1),  is equal to zero.
Next we will show that if this  holds true for all $L_{\times \times}\in  {\mathcal L}^{(2)}$,
then $f$ can be integrated to a framed link invariant.}
\end{remark}
%vskip 0.02in 

\begin{theorem}\label{integration} Suppose that  $M$
is a ${\mathbb Q}$-homology sphere
with $\pi_2(M)=0$ and such that that if $H_1(M)\neq 0$ then
$M$ is  atoroidal.
Let $f: {\mathcal L}^{(1)} \longrightarrow {\mathbb A}$
be an invariant of framed singular links with one double point. Suppose that 
${\mathbb A}$ is torsion free and that the invariant $f$ satisfies the relation
$$f(L_{{\times}+})-f(L_{{\times}-})=f(L_{+{\times}})-f(L_{-{\times}}), \eqno (3)$$
for every 
$L_{\times \times}\in {\mathcal L}^{(2)}$. Then there exists a framed link invariant $F$ such that $f$ is derived from $F$ via equation (1).
Here, the four singular links appearing in (3) are obtained by resolving the singular points of
$L_{\times \times}$ one at a time.
\end{theorem}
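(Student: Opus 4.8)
The plan is to define the candidate invariant $F$ on framed links by choosing a reference representative in each underlying unframed isotopy class and then integrating along a path of crossing changes, with correction terms coming from the framing moves. Concretely, for a fixed underlying (unframed) isotopy class, Chernov's work (cited above) tells us that the framed knot/link types are parametrized by integer sequences $\{\mathbf{f}_1,\dots,\mathbf{f}_m\}$, and that any two framed links with the same underlying unframed type are connected by a finite sequence of crossing changes (admissible and inadmissible). First I would pick, in each path component of the space of immersions, a base link $L_0$, set $F(L_0)$ arbitrarily (or to a prescribed value), and then \emph{define} $F(L)$ by summing the values $\pm f(L_\times)$ over the singular links encountered as we interpolate from $L_0$ to $L$ by a generic homotopy passing through finitely many transverse double points, using equation (1) as the defining increment at each crossing change. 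The key analytic content is that this sum is independent of the chosen path.

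The main step is therefore a \textbf{well-definedness (path-independence)} argument. I would show that any two generic paths between the same endpoints differ by a sequence of elementary moves in the space of immersions, and that $F$ is unchanged under each elementary move. The elementary moves are of two kinds: first, homotopies of the path that slide the locus of double points without changing the combinatorics, which leave the sum untouched because $f$ is an isotopy invariant of singular links; and second, the codimension-one \emph{triple-point} and \emph{tangency} transitions, where two double-point events are commuted past one another. Commuting two inadmissible double points past each other is precisely governed by relation (3): the signed sum of $f$ over the four singular links $L_{\times+},L_{\times-},L_{+\times},L_{-\times}$ obtained from some $L_{\times\times}\in\mathcal{L}^{(2)}$ is forced to vanish, which is exactly the statement that the two orders of resolution give the same contribution. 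For admissible crossing changes and for framing (kink) moves one uses the relations (2) recorded in Remark \ref{relation2}, together with the fact that $f$ is defined on isotopy classes that already incorporate the framing.

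The \emph{hard part} will be organizing the topology of the space of framed immersions into the manifold $M$ so that the generating relations among generic paths are exactly (2) and (3). This is where the hypotheses on $M$ enter. Because $M$ is a $\mathbb{Q}$-homology sphere with $\pi_2(M)=0$ and is atoroidal when $H_1(M)\neq 0$, the relevant moduli of maps $S^1\to M$ (and of the concordances realizing crossing changes) are controlled: $\pi_2(M)=0$ rules out sphere-swallowing moves that would introduce uncontrolled relations, and atoroidality blocks essential tori that would otherwise allow a homotopy of the path to drag a strand around a nontrivial torus and thereby create relations not accounted for by (2) and (3). I would isolate these geometric inputs as the statement that every relation among generic paths between the two endpoints is a composition of the local codimension-one relations, so that the torsion-freeness of $\mathbb{A}$ lets me divide by the coefficient $2$ that appears in the inadmissible framing increments (see Definition \ref{extend}) and conclude that $F$ is a genuine $\mathbb{A}$-valued invariant with $f(L_\times)=F(L_+)-F(L_-)$.

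Finally, once $F$ is shown to be path-independent and hence well-defined on $\mathcal{L}$, verifying equation (1) is immediate from the construction: a single crossing change from $L_+$ to $L_-$ is one elementary increment, so $F(L_+)-F(L_-)=f(L_\times)$ by definition. I expect the write-up to follow the scheme of \cite{ka,kalin} closely, with the new content concentrated in handling links (rather than knots) and in the framing bookkeeping, so I would present the argument as a reduction to those earlier integrability theorems wherever the geometric setup coincides, and spell out in detail only the new torus and framing cases.
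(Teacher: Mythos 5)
Your overall scheme---define $F$ by summing $\pm f$ over the double points of a generic homotopy to a base link, and reduce well-definedness to the vanishing of the signed sum $X_\Phi$ around closed loops in the space of framed immersions---is exactly the architecture of the paper, and your identification of relation (3) as the mechanism for commuting two double-point events (the valence-four vertices of the singularity graph in Lemma \ref{disc}) and of relation (2) for pairing off the inadmissible kink events is also correct. However, the step you label as the hard part---``every relation among generic paths between the two endpoints is a composition of the local codimension-one relations''---is asserted rather than proved, and it is precisely where all the content of the theorem lives: as Remark \ref{necessary} and the Kirk--Livingston examples show, this claim is \emph{false} for general irreducible $3$-manifolds, so it cannot follow from soft transversality considerations. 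The paper establishes it by computing $\pi_1({\mathcal M}^L(P,M),L)\cong\oplus_i Z(a_i)$ (Lemma \ref{centralizerI}), building the homomorphism $\psi:\pi_1({\mathcal M}^L(P,M))\to{\mathbb A}$ of Lemma \ref{homotopy}, and then killing it case by case: finite $\pi_1$ and homotopically trivial components via torsion-freeness of ${\mathbb A}$ and finiteness of $H_1(M)$ (Lemma \ref{centralizer}, Theorem \ref{specialcase}), and inessential tori by extending the loop over a disc (Lemma \ref{tori1}). None of this is in your proposal beyond the observation that the hypotheses ``control'' the mapping space.

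More seriously, your geometric mechanism is wrong in the main new case. You write that ``atoroidality blocks essential tori,'' but the hypothesis of Theorem \ref{integration} only requires $M$ atoroidal when $H_1(M)\neq 0$; a ${\mathbb Z}$-homology sphere satisfying the hypotheses may well contain essential tori, and a self-homotopy $\Phi|P_i\times S^1$ may represent one. Your argument offers nothing for this case, yet it occupies the entire Section 3 of the paper: one must invoke the Torus Theorem and the Enclosing Theorem to homotope the torus into a Seifert fibered piece, make it vertical, extend the loop over a trivial $S^1$-bundle $Y=S^1\times F$ over a planar surface (Lemma \ref{essential}), pass to a covering of $Y$ to reduce to a family parametrized by a disc, and only then conclude $cX_\Phi=0$ for some integer $c$ and divide using torsion-freeness (Lemma \ref{tori}). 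Relatedly, your use of torsion-freeness ``to divide by the coefficient $2$'' from framing increments is not how the hypothesis is actually used; it is needed to divide by the covering degrees $a$ and $c$ arising in Lemmas \ref{tori1} and \ref{tori} and to kill homomorphisms from finite groups. As written, your proof would at best establish the atoroidal case (Theorem \ref{specialcase}), and even there only modulo the unproven structural claim about the mapping space.
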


Theorem \ref{integration} is the framed link analogue of Theorem 3.16 of \cite{ka} and Theorem 3.1.2 of \cite{kalin}. As explained in the Introduction, however,
here we work in a more general class of manifolds. Also the presence of framing requires an adaptation of the arguments:
to formulate  the correct ``global integrability condition"  (equation (6) below)  we need
a notion of global framing around
homotopies of links. The definition of such a notion  is facilitated by the works of Chernov and Kaiser \cite{chernov, kaiser} (Definition \ref{framedhomo}).
For arguments  that are very similar to these in \cite{ka, kalin} we will refer the reader to
these articles for details. 

\subsection{Loop space and framing control}
Because in this section we work with oriented links  we need to slightly modify the set of initial links   $ \mathcal C \mathcal L^* \cup \{U\}$ chosen in the Introduction.
Recall that ${\mathcal L}$ (resp. ${\bar {\mathcal L}}$) denotes the set of isotopy classes of framed  oriented (resp. unoriented) links in $M$.
Consider the set of oriented links ${\mathcal C}{\mathcal L}:= {\mathfrak o}^{-1}(\mathcal C \mathcal L^* \cup \{U\})$, where ${\mathfrak o}:  {\mathcal L} \longrightarrow {\bar {\mathcal L}}$
is the obvious forgetful map. Also recall that
${\tilde {\mathcal L}}$ denotes the set of isotopy classes of ordered
framed links in $M$ and that we defined a forgetful map ${\mathfrak r}: {\tilde {\mathcal L}} \longrightarrow {\mathcal L}$.
Given $CL\in {\mathcal C}{\mathcal L}$, we pick $L\in {\mathfrak r}^{-1}({\mathcal C}{\mathcal L})$.
We will also use $L$ to denote a representative $L:P \longrightarrow
 M$ of $L$, where $P$ is a disjoint union of oriented circles.  Let ${{\mathcal M}}^L(P, M)$ denote the space of  ordered smooth framed immersions 
$P\longrightarrow M$ homotopic to $L$, equipped with the compact-open
topology. 
For every $L'\in {\tilde {\mathcal L}}$ and representative $L'\in {\mathcal M}^L(P, M)$,
let  $\Phi:  P {\times}[0, 1]
\longrightarrow M$ be a homotopy with  $\Phi(P\times \{0\}) =L'$ and $\Phi(P\times \{1\} )=L$.
After a small perturbation we can assume that for only finitely many points
$0<t_1<t_2<\cdots<t_n<1$, ${\phi}_{t}:= \Phi(P\times \{t\}) $ is not an embedding and it is a singular framed
link of order $1$. For different $t's$ in an interval of $[0, \ 1]\setminus \{t_1, \ t_2, \
\dots, t_n\}$ the corresponding framed links are equivalent and when  $t$ passes
through $t_i$, ${\phi}_t$ changes from one resolution of ${\phi}_{t_i}$
to the other.  

For $CL\in {\mathcal C}{\mathcal L}$,  
let ${\mathcal M}^{CL}(M)$ denote the space of unordered smooth framed immersions homotopic to
$CL$, equipped with the compact-open
topology. The projection $ {\mathfrak q}: {\mathcal M}^L(P, M)\longrightarrow {\mathcal M}^{CL}(M)$ is a covering map away from points that are fixed
under permutation of components.

\begin{define} \label{framedhomo}Let 
$\Phi$ be a homotopy between ordered links $L_1, L_2\in {{\mathcal M}}^L(P, M)$ 
with points
$0<t_{1}<\cdots<t_{n}<1$
such that  ${\phi}_{t_{j}} \in {\tilde  {\mathcal L}} ^{(1)} $.  For each singular link ${\phi}_{t_{j}}$ we have a sequence
$\{{\bf f}^j_i| i=1, \ldots, m\}$ as in Definition \ref{extend}.
We define the {\emph total framing} of $\Phi$  to be the sequence of integers 
$\{\Delta{\bf f}_i|i=1, \dots, m\}$, where
$$\Delta {\bf f}_i:=
\sum_{j=1}^{n}\delta_j^i{\epsilon}_{j} {\bf f}^{j}_i({\phi}_{t_{j}}). \eqno (4)$$
Here $\delta_j^i=1$ if the $i$-th component of ${\phi}_{t_{j}}$
contains the double point  and  0 otherwise. Also
${\epsilon}_{j}=1$ if
${\phi}_{t_{j}+\delta}$, for $\delta >0$ sufficiently small, is a positive
resolution of ${\phi}_{t_{j}}$ and
${\epsilon}_{j}={-1}$ otherwise. We will say that the total framing is zero
iff $\Delta {\bf f}_i=0$, for all $1, \dots, m$.

Given a loop $\Phi\in {\mathcal M}^{CL}(M)$ we obtain a set
of ordered sequences $\Delta{\bf f}_{\Phi}$ associated to the set of all lifts of $\Phi$ in $ {\mathcal M}^L(P, M)$.
The map ${\mathfrak q}^{-1}(\Phi)\longrightarrow \Delta{\bf f}_{\Phi}$ defines an unordered
sequence of integers for $\Phi$.
The homotopy  $\Phi$ is called {\em framing preserving} 
iff the total framing of every element in ${\mathfrak q}^{-1}(\Phi)$ is zero.
We will write
$\Delta{\bf f}_{\Phi}={\bf 0}$.
\end{define}

\subsection {Beginning the proof of Theorem \ref{integration} }  We want to define an invariant $F:{\mathcal L} \longrightarrow {\mathbb A}$
that is obtained from the given $f: {\mathcal L}^{(1)} \longrightarrow {\mathbb A}$ via (1).
First we assign values of  $F$ on the set of  initial links ${\mathcal C}{\mathcal L}$.
Now fix $CL\in {\mathcal C}{\mathcal L}$ and let
$L' \in {\mathcal M}^{CL}(M)$ be a framed link.
Choose a generic  homotopy $\Phi$
from $L'$ to $CL$.
Let $0<t_1<t_2<\cdots<t_n<1$ denote the points where ${\phi}_t$ is not an embedding. Recall that
${\phi}_{t_{i}} \in {\mathcal L} ^{(1)} $ such that
for different $t's$ in an interval of $[0, \ 1]\setminus \{t_1, \ t_2, \
\dots, t_n\}$, the corresponding framed links are equivalent. When $t$ passes
through $t_i$, ${\phi}_t$ changes from one resolution of ${\phi}_{t_i}$
to another.  
We define
$${F(L')=F(CL)+\sum_{i=1}^{n}{\epsilon}_i f({\phi}_{t_i})}\eqno (5)$$
Here ${\epsilon}_i={\pm}1$ is determined as follows: If
${\phi}_{t_i+\delta}$, for $\delta >0$ sufficiently small, is a positive
resolution of ${\phi}_{t_i}$ then ${\epsilon}_i=1$. Otherwise
${\epsilon}_i={-1}$.

To prove that $F$ is well defined we have to show that modulo ``the
integration constant" $F(CL)$, the definition of $F(L')$
is independent of the choice of the homotopy. For this we consider a
closed homotopy $\Psi$ from $CL$ to itself. After a small perturbation,
we can assume that there are only finitely many points
$x_1,x_2,\dots,x_n \in S^1$, ordered cyclicly according to the
orientation of $S^1$, so that ${\psi}_{x_i}\in {\mathcal L}^{1}$ and
$\psi_x$ is equivalent to $\psi_y$ for all $x_i<x,y<x_{i+1}$. To prove that $F$
is well defined
we need to show that
$${X_{\Psi}:=
\sum_{i=1}^{n}{\epsilon}_i f({\psi}_{t_i})=0} \eqno(6)$$
where ${\epsilon}_i={\pm}1$ is determined by the same rule as above.
\vskip 0.04in

{\underline{Independence of link component orderings:}}  To prove (6) we will turn our attention to ordered links:
First we note that the invariant $f$ pulls back to an invariant on ${\tilde {\mathcal L}}^{(1)}$ via the forgetful map ${\mathfrak  r}$.
After iterating $\Phi$ several times if necessary  we can assume that it  lifts to a loop in
$ {\mathcal M}^L(P, M)$ based at $L$ (compare,
page 3874 of \cite{kaiser}). Given a self-homotopy $\Phi$ of $CL$ and the associated quantity $X_{\Phi}$,  lift
$\Phi$ to a closed homotopy $\Psi$ in  ${\mathcal M}^L(P, M)$ and let $X_{\Psi}$ denote the lift of $X_{\Phi}$.
Note that 
$X_{\Psi}=a X_{\Phi}$, for some integer $a\in {\Z}$. Since ${\mathbb A}$ is torsion free we have
$X_{\Phi}=0$ exactly when $X_{\Psi}=0$. Thus, it is enough to check (6) for  
 homotopies that preserve the ordering of components.
 \vskip 0.04in
 
 {\underline{Restriction to framing preserving homotopies:}} Next we observe that it is enough to check (6) for homotopies that are framing preserving in the sense of Definition \ref{framedhomo}: To see that we recall that given a framed link 
 $L'\in {\mathcal M}^{CL}(M)$ we need to check that  (5) does not depend on  the homotopy from $L'$ to the framed link $CL$ used to define it.
Thus the closed homotopies $\Phi$  that we need (6) to hold for, are those obtained by composing two homotopies from $L'$ to $CL$. Each component of $CL$ is equipped with
a vector field and going around $\Phi$ does  not change the homotopy class of this vector field (that is the equivalence class of $CL$ as a framed link). We can think that the framing of $CL$ transports
to a ``new" framing around $\Phi$. The two framings might differ by twists on the components of $CL$ but the total singed number of the twists must be zero.
The total sum of such twists  is captured exactly by the quantity $\Delta{\bf f}_{\Phi}$ (compare, Theorem 6 of \cite{kaiser}).
The framing of $CL$ lifts to one on $L$ and
 going around the self-homotopy of $L$ that lifts $\Phi$ also preserves the homotopy class of the framing vector field.  

 %\smallskip
 
 The proof of (6), which occupies the remaining of Section 2 and Section 3, will be divided into several steps. In this section we will  give the proof of (6) for closed homotopies
 in atoroidal 3-manifolds and in the next section we deal with essential tori. 
%\vskip 0.06in

To continue,  suppose that $P$ has $m$
components $P =\sqcup_{i=1}^{m} P_i$,
where each $P_i$ is an oriented circle. Let $L: P
\longrightarrow M$ be a link.
Pick a base point $p_i \in P_i$ and
let $a_i$ denote the homotopy class of $L(P_i)$ in
$\pi_1 (M, L(p_i))$. We denote by $Z(a_i)$ the centralizer of
$a_i$ in $\pi_1(M, L(p_i))$.
We begin with the following lemma (see, for example, the proof of Proposition 4.3 of \cite{lin}).

\begin{lemma}  \label{centralizerI}Suppose that $M$ is an
orientable 3-manifold
with $\pi_2(M)=0$ and let the notation be as above.
Then
$$\pi_1({\mathcal  M}^L(P, M), L) \cong \oplus_{i=1}^{m}Z(a_i).$$
\end{lemma}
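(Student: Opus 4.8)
### Proof Strategy for Lemma \ref{centralizerI}

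The plan is to compute $\pi_1$ of the space of framed immersions $\mathcal{M}^L(P,M)$ by relating it to loop spaces of $M$ and using the hypothesis $\pi_2(M)=0$ to kill higher obstructions. First I would reduce from immersions to maps: since $P$ is $1$-dimensional and $M$ is $3$-dimensional, a generic map $P \to M$ is already an immersion, and by transversality arguments the inclusion of the space of framed immersions into the full mapping space $\mathrm{Map}^L(P,M)$ of maps homotopic to $L$ is a weak homotopy equivalence in the relevant low degrees (the complement of immersions has high codimension). The framing data contributes the choice of a normal vector field along each component; since the normal bundle of an embedded circle in an oriented $3$-manifold is trivial, the space of framings on a fixed component is homotopy equivalent to $\mathrm{Map}(S^1, S^1) \simeq S^1 \times \mathbb{Z}$, but the relevant component (fixing the homotopy class of the framing) is connected and simply connected in the directions that matter, so the framing does not alter $\pi_1$. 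This lets me replace $\mathcal{M}^L(P,M)$ by the based mapping space in the homotopy-theoretic computation.

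Next I would use the fact that $P = \sqcup_{i=1}^m P_i$ is a disjoint union, so $\mathrm{Map}^L(P,M) \cong \prod_{i=1}^m \mathrm{Map}^{L_i}(P_i, M)$, where $L_i = L|_{P_i}$, giving $\pi_1(\mathcal{M}^L(P,M)) \cong \oplus_{i=1}^m \pi_1(\mathrm{Map}^{L_i}(S^1,M))$. Thus it suffices to identify $\pi_1$ of the free loop space component $\mathrm{Map}^{a_i}(S^1,M)$ containing the loop representing $a_i$. The standard tool is the fibration
$$\Omega M \longrightarrow \Lambda M \xrightarrow{\;\mathrm{ev}\;} M,$$
where $\Lambda M$ is the free loop space and evaluation at the basepoint is the projection with fiber the based loop space $\Omega M$. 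Restricting to the path component of $a_i$ and examining the long exact sequence of homotopy groups is the heart of the argument.

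The key computation is extracting $\pi_1$ from this fibration. The tail of the long exact sequence reads
$$\pi_2(M) \longrightarrow \pi_1(\Omega M) \longrightarrow \pi_1(\Lambda M)_{a_i} \longrightarrow \pi_1(M) \xrightarrow{\;\partial\;} \pi_0(\Omega M).$$
Since $\pi_1(\Omega M) \cong \pi_2(M) = 0$ by hypothesis, the left term contributes nothing, so $\pi_1(\Lambda M)_{a_i}$ injects into $\pi_1(M)$. The image is exactly the kernel of the boundary map $\partial$, and I would identify this kernel with the centralizer $Z(a_i)$: the boundary map sends a class $g \in \pi_1(M)$ to the effect of the evaluation monodromy on the based loop, which conjugates $a_i$ by $g$, so $g$ lies in the kernel precisely when $g a_i g^{-1} = a_i$, i.e. $g \in Z(a_i)$. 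This yields $\pi_1(\mathrm{Map}^{a_i}(S^1,M)) \cong Z(a_i)$, and assembling over all components gives the claimed direct sum.

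The main obstacle I expect is the careful justification that the framing and immersion conditions do not change $\pi_1$ — in particular verifying that passing from framed immersions to the full free-loop-space component is a $\pi_1$-isomorphism, which requires a transversality/general-position argument to show the non-immersive and singular loci have codimension at least $2$ in the mapping space. The loop-space fibration computation itself is clean once $\pi_2(M)=0$ is invoked; the delicate point is correctly identifying the monodromy action of the boundary map $\partial$ with conjugation, which is where the centralizer genuinely enters. The author's reference to Proposition 4.3 of \cite{lin} suggests this framing-control and monodromy bookkeeping is precisely the technical content being imported.
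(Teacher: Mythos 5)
Your central computation is exactly the argument the paper imports: the paper offers no proof of this lemma beyond the pointer to Proposition 4.3 of \cite{lin}, and that proof is precisely the evaluation fibration $\Omega M \to \mathrm{Map}(S^1,M) \to M$, the vanishing of $\pi_1(\Omega M)\cong \pi_2(M)$, and the identification of the connecting map $\pi_1(M)\to\pi_0(\Omega M)$ with conjugation, so that the stabilizer of the component of $a_i$ is $Z(a_i)$; the splitting over the components of $P$ is also as you describe. On this core the two arguments coincide and your monodromy bookkeeping is correct.

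The genuine gap is in your reduction from framed immersions to the free loop space. A component of $\mathrm{Map}(S^1,S^1)$ is homotopy equivalent to $S^1$, not simply connected, so the space of framings of a fixed curve contributes a potential $\Z$ to $\pi_1$ (the loop given by uniformly rotating the normal field through $2\pi$); similarly, by the Hirsch--Smale $h$-principle the fiber of $\mathrm{Imm}(S^1,M)\to \mathrm{Map}(S^1,M)$ is, up to weak equivalence, $\mathrm{Map}(S^1,S^2)$, whose fundamental group is $\pi_2(S^2)\cong\Z$. Neither discrepancy is a ``high codimension'' phenomenon that a transversality or general-position argument removes, so the assertion that passing from framed immersions to maps is a $\pi_1$-isomorphism is unproved as written and the justification you give for it is false. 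To close the argument you must either show that these extra classes become trivial (or irrelevant) in $\pi_1({\mathcal M}^L(P,M),L)$, or observe --- as the paper implicitly does --- that the lemma is only ever applied through the forgetful map to the free loop space, since the quantity $X_\Phi$ attached to a closed homotopy depends only on the underlying loop of maps and the singular links it traverses.
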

%\vskip 0.03in

\subsection{Integrating around inessential tori}
Here we show how to derive (6) in the case where the closed homotopy
$\Phi$ represents a collection of inessential tori  in $M$. Since $\partial M=\emptyset$ this means
that the induced map $({\Phi_i})_*: \pi_1(P_i \times S^1) \longrightarrow  \pi_1(M)$
has non-trivial kernel. Here $\Phi_i:= \Phi | P_i \times S^1$, for $i=1, \dots, m$.

\begin{lemma} \label{disc} Let $\Phi$ be a loop in ${\mathcal  M}^L(P, M)$ representing a framing preserving self-homotopy  of  $L$. Suppose that $\Phi$ can be extended to a map ${\hat \Phi}: P\times D^2
\longrightarrow M$ where $D^2$ is a 2-disc with $\partial D^2 = \{*\} \times
S^1$. Then $X_{\Phi}=0$.
\end{lemma}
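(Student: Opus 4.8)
The plan is to interpret $X_\Phi$ as the monodromy of a locally defined ``antiderivative'' of $f$ over the disc, and to reduce its vanishing to the local behaviour of $f$ near the codimension-two degenerations of the family $\hat\Phi$. This is the disc analogue of the argument used for knots in \cite{ka}, with the framing handled by the fact that $D^2$ is contractible.

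First I would perturb $\hat\Phi$ rel $P\times\partial D^2$ so that it is generic as a smooth family. Since $D^2$ is contractible, the framing along $\Phi$ extends to a framing of $\hat\Phi$, so each slice $\phi_s:=\hat\Phi|_{P\times\{s\}}$ is a framed immersion and the framing-preserving hypothesis is automatically consistent with this extension. The locus $C:=\{\,s\in D^2:\ \phi_s \text{ is not an embedding}\,\}$ is then a $1$-complex in $D^2$: along its open edges $\phi_s\in{\mathcal L}^{(1)}$ varies by framed isotopy (so $f(\phi_s)$ is locally constant there), its boundary $\partial C$ meets $\partial D^2$ exactly in the points $t_1,\dots,t_n$ of (6), and its interior vertices are of the three standard types occurring when immersed circles in a $3$-manifold degenerate in a two-parameter family: a link with two transverse double points (an element of ${\mathcal L}^{(2)}$, appearing as a transverse crossing of two edges of $C$), a link with a single triple point (three edges meeting), and a link with a single self-tangency (a cusp/extremum of $C$). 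The hypothesis $\pi_2(M)=0$ is what guarantees that this is the complete local picture and no further degenerations survive the perturbation.

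Co-orienting $\Sigma_1$, the discriminant of framed singular links with one double point, by the positive resolution (using the orientations of $M$ and of the link) co-orients $C$ inside $D^2$, and the sign $\epsilon_i$ in (6) records the direction in which $\partial D^2$ crosses $C$ at $t_i$. I would then introduce a function $F$ on $D^2\setminus C$, unique up to an additive constant, that jumps by $f(\phi_s)$ whenever $C$ is crossed in the co-orientation direction; by construction $X_\Phi$ is exactly the monodromy of $F$ around $\partial D^2$. Because $D^2$ is simply connected, this monodromy equals the sum of the local monodromies of $F$ around the interior vertices of $C$, so it suffices to show that each of these vanishes.

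The heart of the matter is the vertex-by-vertex cancellation. At a double–double point the four edges of $C$ carry the values $f(L_{\times+}),f(L_{\times-}),f(L_{+\times}),f(L_{-\times})$, and the local monodromy is precisely the alternating sum that relation (3) sets to zero; this is where the global integrability hypothesis is used. At a self-tangency the two nearby double points form a canceling pair, so a Reidemeister-II isotopy identifies the singular links on the two edges meeting at the cusp, and their jumps, crossed in opposite co-orientation senses, cancel. At a triple point the six jumps encountered on going once around telescope: opposite edges differ by resolving an auxiliary crossing, and relation (3)—which makes the second difference $\hat f(L_{\times\times}):=f(L_{\times+})-f(L_{\times-})$ a well-defined function on ${\mathcal L}^{(2)}$—together with the isotopies available near the triple point forces the total to vanish, exactly as in the knot case of \cite{ka}. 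Summing the vanishing local monodromies gives $X_\Phi=0$. I expect the main obstacle to be the bookkeeping at the triple-point and self-tangency vertices: pinning down co-orientations and signs so that the double–double-point contribution is \emph{exactly} relation (3), and verifying that the extension of the framing over $P\times D^2$ introduces no extra jumps, which is precisely the role of the framing-preserving assumption.
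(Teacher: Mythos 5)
Your overall skeleton---put $\hat \Phi$ in general position, view $X_{\Phi}$ as the total monodromy of a locally constant ``antiderivative'' on $D^2\setminus S_{\hat \Phi}$, and reduce to the vanishing of the local monodromy around each interior vertex---is exactly the paper's strategy, and your treatment of the valence-four vertices (two transverse double points, cancelled by relation (3)) agrees with the paper. But your list of interior vertices is wrong, and the error is not cosmetic: it removes the only place where the framing-preserving hypothesis and relation (2) are actually needed. For curves in a $3$-manifold (as opposed to planar diagrams) a triple point and a self-tangency are each of codimension three in the space of maps, so neither occurs in a generic two-parameter family; your Reidemeister-II and Reidemeister-III vertices are artifacts of the diagrammatic picture. (Nor does $\pi_2(M)=0$ play any role in this local classification---it is pure transversality; the paper uses $\pi_2(M)=0$ elsewhere, e.g.\ in Lemma \ref{centralizerI}.) What does occur in codimension two, and what you have omitted by asserting that the edges of the singular locus terminate only on $\partial D^2$, is the birth or death of a small kink: an edge of $S_{\hat \Phi}$ consisting of inadmissible singular links (Definition \ref{admissible}) ending at an interior vertex of valence one.

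Around a valence-one vertex the local monodromy is a single jump $\pm f({\hat \phi}_{x})$ with ${\hat \phi}_{x}$ inadmissible, and nothing local forces it to vanish. This is where the paper's proof does real work: after using relation (3) to reduce $X_{\Phi}$ to the sum of the contributions of the inadmissible singular links met along $\partial D^2$, it uses the hypothesis $\Delta{\bf f}_{\Phi}={\bf 0}$ (together with the fact that ${\bf f}$ changes only under inadmissible crossing changes) to conclude that these inadmissible singular links can be partitioned into pairs of the types $L_{\times r}, L_{r\times}$ and $L_{\times l}, L_{l\times}$ of Figure 3, and then relation (2) of Remark \ref{relation2} to cancel each pair. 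Your only gesture toward this---that the framing extends over the contractible disc and so ``introduces no extra jumps''---is not an argument: without the framing-preserving hypothesis the sum of the inadmissible contributions need not vanish, which is exactly the content of Remark \ref{nopreserving}, where only the admissible part ${\bar X}_{\Phi}$ is shown to vanish for free. To repair the proof you must add the valence-one vertices to your local picture, delete the triple-point and tangency cases, and supply the pairing argument for the inadmissible singular links.
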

\begin{proof}
We perturb ${\hat \Phi}$, relatively $\partial D^2$,  so that it is in general position
in the sense of Proposition 1.1 of \cite{ka}. Then the set
$$S_{{\hat \Phi}}:= \{ x \in D^2 \  | \ {\hat \phi}_x:= {\hat \Phi}(P\times \{x\})  \  {\rm is \  not\  an \  embedding}\},$$
is a graph in $D^2$ with properties (1)-(5) given in Proposition 1.1 of \cite{ka}.
The vertices of $S_{{\hat \Phi}}$ in the interior of $D^2$ are of valence one or  four (see Figure 4).

\begin{figure}[htbp] %
\centering
\includegraphics[width=2.00in, ]{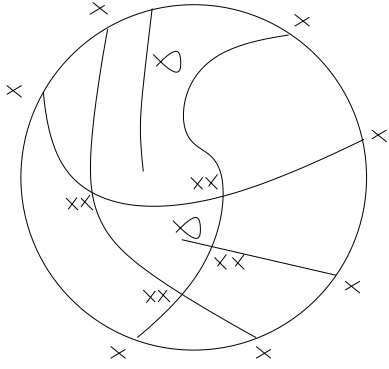} 
\caption{The set of singularities $S_{{\hat \Phi}}$ with the types of double points they represent. }
\end{figure}
The invariant
$f$ assigns an element of $\mathbb A$ to every edge of $S_{{{\hat \Phi}}}$.
We observe that  condition (3) in the statement of Theorem \ref{integration}
implies that 
$X_{{\Phi}}$ is independent on the order in which the crossing changes
around $\Phi:={\hat \Phi}| P\times \partial D^2$ occur. Thus, without loss of generality, we may assume that
the valence one vertices of $S_{{{\hat \Phi}}}$ in the interior of $D^2$
correspond to
inadmissible crossing changes on 
$\partial D^2$. 
With the notation as above,  we will assume that the framed singular link  ${\phi}_{x_i}\in {\mathcal L}^{1}$
is inadmissible
for $i=1, \ldots, s$  and admissible  for 
$i=s, \ldots, n$. In particular,  there are  $s$ edges of $S_{{{\hat \Phi}}}$ emanating from
$x_1,\dots,x_s$ respectively  and ending at an interior vertex of valence one,  and these are the only valence one vertices of $S_{{{\hat \Phi}}}$.
\begin{figure}[htbp] %
\centering
\includegraphics[width=2.3in, ]{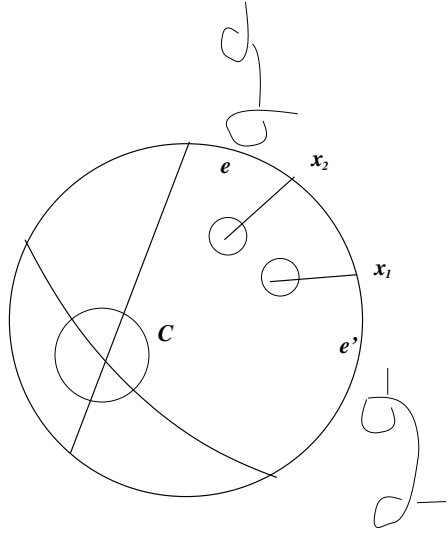} 
\caption{The singular links ${\phi}_{x_1}$, ${\phi}_{x_2}$
form a pair of type
 $L_{\times r}, L_{r \times }$
(or $L_{\times l},  L_{l \times }$) shown in Figure 3. The framed links corresponding to the components  $e, e'$ of  ${\partial D^2} \setminus \{x_1, x_2, \dots \}$
are isotopic.}
\end{figure}
   
For every interior
vertex of $S_{{{{\hat \Phi}}}}$ we draw a small circle $C$ around it so that the
number of
points in $C\cap S_{{ {{\hat \Phi}}}}$ is equal to the valence of the vertex. See Figure 5.
Let $C_1,\ldots, C_s$ denote the circles surrounding the valence one vertices
of $ S_{{{{\hat \Phi}}}}$ and let ${\Gamma}$  denote the disjoint union of the circles surrounding
the vertices of valence four. For a vertex of valence four the four points in
$C\cap S_{{\hat \Phi}}$ correspond  exactly to these appearing in equation
(3). Thus by (3) we have
$${\sum _ {x\in \Gamma \cap S_{{{{\hat \Phi}}}}}{\epsilon_x}f({{\hat \phi}}_{x})=0,} \eqno(7)
$$
where
$ {\hat \phi}_x:={{ {\hat \Phi}} (P\times \{x\})}.$
Now observe that 
$$ \sum_{i=s+1}^{n}{\epsilon}_i f({{\phi}}_{x_i})=\sum _ {x\in \Gamma \cap S_{{{{\hat \Phi}}}}}{\epsilon_x}f({{\hat \phi}}_{x})=0.$$
The last equation and (7) imply that
 $$X_{\Phi}= \sum_{i=1}^{s}{\epsilon}_i f({{\phi}}_{x_i}). \eqno(8)$$
Since   $\Phi$  is framing preserving we have  $\Delta{\bf f}_{\Phi}={\bf 0}$.
By Definitions \ref{extend} and  \ref{framedhomo} and the fact that ${\bf f}$ remains unchanged under admissible crossing 
changes we have
$\Delta{\bf f}_{C}={\bf 0}$, for every loop
$C\in \Gamma$.
This in turn  implies that
$$\Delta{\bf f}_{\Gamma}:= \sum _ {C\in \Gamma}\Delta{\bf f}_{C} ={\bf 0}$$
Since we have
$$\Delta{\bf f}_{\Phi}=\sum_{i=1}^{s}{\epsilon}_i {\bf f}({{\phi}}_{x_i})+\Delta{\bf f}_{\Gamma}={\bf 0}$$
we  conclude that $ \sum_{i=1}^{s}{\epsilon}_i {\bf f}({{\phi}}_{x_i})={\bf 0}$. This in turn implies that the inadmissible singular links 
${{\phi}}_{x_i}$ can be partitioned into pairs of the forms shown in Figure 3.
Relation (2) in Remark \ref{relation2} shows that the right hand side of (8) is identically zero.  Thus $X_{\Phi}=0$,  as desired.
\end{proof}

\begin{remark} \label{nopreserving} {\rm Let ${\bar X}_{\Phi}$ denote the contribution of the admissible singular links around $\Phi$ to $X_{\Phi}$.
The proof of Lemma \ref{disc} shows that regardless of whether $\Phi$ is framing preserving, relation (3) implies that ${\bar X}_{\Phi}=0$.}
\end{remark}

\begin{remark} \label{smooth}Proposition 1.1 of \cite{ka}, referenced in the proof of Lemma \ref{disc}, is stated in there for  the PL-category.
However, as explained by Kaiser in Section 3 of  \cite{kaiser1}, the statement is true in the smooth category  which is actually what we need here.
We should also remark that, as explained by Lin in \cite{lin},  the conclusion holds if the disc $D^2$ is replaced by any planar surface $F$.
Furthermore, if $\Phi|\partial F$ is already in general position then the modifications that put $\Phi$ into general position on $F$ can be performed relatively $\partial F$.
\end{remark}

A  slight variation of the proof   of Lemma \ref{disc} shows the following:

\begin{lemma} \label{disc1} Let $\Phi$ be a loop in ${\mathcal  M}^L(P, M)$ representing a framing preserving self-homotopy  of a framed link $L$. 
Let $P':=P\setminus P_1$. Suppose that $\Phi|P'$ can be extended to a map ${\hat \Phi}: P'\times D^2
\longrightarrow M$ where $D^2$ is a 2-disc with $\partial D^2 = \{*\} \times
S^1$. Suppose moreover that $\Phi| (P_1\times S^1)$ is an embedding. Then $X_{\Phi}=0$.
\end{lemma}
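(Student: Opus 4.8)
The plan is to reduce this statement to Lemma \ref{disc} by exploiting the embedding hypothesis to make the first component stationary. The key observation is that, since $\Phi|(P_1\times S^1)$ is an embedding, for each $t$ the restriction $\phi_t|P_1$ is an embedding of the circle $P_1$, and as $t$ runs over $S^1=\partial D^2$ these assemble into a based loop of embeddings of $P_1$ into $M$. By the isotopy extension theorem this loop is covered by a path $H_t$ of ambient diffeomorphisms of $M$ with $H_0=\mathrm{id}$ and $H_t\circ(\phi_0|P_1)=\phi_t|P_1$; since $\Phi$ is a loop, $H_1\circ(\phi_0|P_1)=\phi_0|P_1$, so $H_1$ fixes $L(P_1)$ pointwise.

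Next I would replace $\Phi$ by the straightened homotopy $\Phi'_t:=H_t^{-1}\circ\phi_t$, for which $\Phi'_t|P_1\equiv L|P_1=:K_1$ is constant, and then close it up by appending the ambient isotopy $\{H_s^{-1}\circ L\}_{s:1\to 0}$ to return to the basepoint $L$; call the resulting closed self-homotopy $\tilde\Phi$. Because the $H_t$ are ambient diffeomorphisms and the appended piece is an isotopy carrying no singular events, the singular links occurring along $\tilde\Phi$ are ambient isotopic to those of $\Phi$ with the same signs, whence $X_{\tilde\Phi}=X_\Phi$; likewise $\tilde\Phi$ remains framing preserving, $\Delta\mathbf{f}_{\tilde\Phi}=\mathbf{0}$, since ambient isotopies preserve the homotopy class of each framing field.

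The point of the straightening is that $\tilde\Phi$ now extends over the disc on all of $P$: the first component is the constant $K_1$, which extends trivially as $(p,x)\mapsto K_1(p)$ on $P_1\times D^2$, while $\tilde\Phi|P'$ is null-homotopic because it is homotopic rel basepoint to the original loop $\Phi|P'$, which bounds the given disc $\hat\Phi$. Combining these two extensions produces a map $P\times D^2\to M$ bounding $\tilde\Phi$, so $\tilde\Phi$ satisfies all the hypotheses of Lemma \ref{disc}. That lemma then gives $X_{\tilde\Phi}=0$, and therefore $X_\Phi=0$.

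The main obstacle is the bookkeeping that makes this reduction rigorous, and most delicately the claim that $\tilde\Phi|P'$ still bounds a disc. I would establish this by the following homotopy: set $\Theta_v(t):=H_{vt}^{-1}(\phi_t|P')$, so that $\Theta_0=\Phi|P'$ and $\Theta_1$ is the first half of $\tilde\Phi|P'$, while the endpoint $\Theta_v(1)=H_v^{-1}(L|P')$ sweeps out exactly the appended path. This exhibits $\tilde\Phi|P'$ as homotopic rel basepoint to $\Phi|P'$ in $\pi_1\bigl(\mathcal{M}^{L|P'}(P',M),L|P'\bigr)$; since $\Phi|P'$ represents the trivial element (it extends over $\hat\Phi$), so does $\tilde\Phi|P'$, and the required disc filling for $P'$ follows. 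Once this compatibility, together with the closedness and framing-preserving property of $\tilde\Phi$, is verified, the conclusion is immediate from Lemma \ref{disc}.
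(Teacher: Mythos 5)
Your proof is correct, and it is worth noting that the paper itself offers no written argument here beyond the remark that ``a slight variation of the proof of Lemma \ref{disc}'' suffices --- presumably meaning one reruns the general-position singularity-graph argument on $P'\times D^2$ while treating the crossings involving the isotoped component $P_1$ separately along $\partial D^2$. You take a genuinely different route: rather than reopening the internals of Lemma \ref{disc}, you reduce to its statement as a black box by straightening $P_1$ with the isotopy extension theorem, closing up with the ambient return path $\{H_s^{-1}\circ L\}$, and checking the three compatibilities that make the reduction legitimate (invariance of $X$ and of the framing-preserving property under the time-dependent ambient diffeomorphisms, and the fact that the straightened $P'$-part still bounds a disc, via your homotopy $\Theta_v(t)=H_{vt}^{-1}(\phi_t|P')$). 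Each of these checks is sound: the $H_t$ are isotopic to the identity, hence orientation-preserving, so resolution signs, admissibility, and the framing jumps $\mathbf{f}^j_i$ at singular instants are all carried over unchanged, and the appended path contributes no singular events. What your approach buys is a cleaner logical structure (Lemma \ref{disc} is quoted verbatim rather than re-proved); what it costs is exactly the bookkeeping you identified, which you handle. One further remark in your favor: you only ever use the weaker consequence that each time-slice $\phi_t|P_1$ is an embedding (a loop of embeddings, to which isotopy extension applies), not that the full torus $\Phi(P_1\times S^1)$ is embedded; this weaker hypothesis is the one actually available where the lemma is invoked in the proof of Lemma \ref{tori} (the homotopy $\Psi_2$, during which $L_1$ crosses other components but never itself), so your argument in fact covers the form of the statement that the paper needs.
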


 The proof of the next lemma is given in the proof of  Lemma 3.3.4 of \cite{kalin}. 

\begin{lemma} \label{tori1} Let $M$ be a $\Q$-homology 3-sphere
with $\pi_2(M)=0$. Suppose that  $\pi_1(M)$ is infinite and that  $L$ has no homotopically trivial components. Let $\Phi \subset {\mathcal  M}^L(P, M)$ be a
framing preserving  closed homotopy  such that
the restriction
$\Phi| P_i\times S^1\longrightarrow M$  is inessential, for all $i=1,\dots, m$.
There exists a 2-disc $D^2$ and a map 
${\tilde {\Phi}}: P\times D^2 \longrightarrow M$ such that
$$X_{\partial\tilde{\Phi}}= a X_{{\Phi}}, \eqno(9)$$
for some $a\in\Z$. Here ${\partial\tilde{\Phi}}={\tilde {\Phi}}| P\times 
{\partial
D^2}$.
\end{lemma}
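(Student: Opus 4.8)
The plan is to reduce everything to an explicit computation inside the fundamental group of the mapping space via Lemma~\ref{centralizerI}, and then to cap off a suitably corrected power of $\Phi$ by a disc. First I would record that, since $M$ is a closed orientable $3$-manifold with $\pi_2(M)=0$ and $\pi_1(M)$ infinite, its universal cover is contractible; hence $M$ is aspherical and $\pi_1(M)$ is torsion-free. Writing $P=\sqcup_i P_i$ and $a_i$ for the class of $L(P_i)$, Lemma~\ref{centralizerI} identifies $\pi_1({\mathcal M}^L(P,M),L)\cong\oplus_i Z(a_i)$; under this identification the loop $\Phi$ is a tuple $(g_1,\dots,g_m)$ with $g_i\in Z(a_i)$, and the torus $\Phi_i:=\Phi| P_i\times S^1$ induces on $\pi_1$ the map $(x,y)\mapsto a_i^{\,x}g_i^{\,y}$.

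Next I would extract the algebraic input from inessentiality. Since $\partial M=\emptyset$, the restriction $\Phi_i$ being inessential means exactly that $(\Phi_i)_*\colon\Z^2\to\pi_1(M)$ has non-trivial kernel, so $a_i^{\,p_i}g_i^{\,q_i}=1$ for some $(p_i,q_i)\ne(0,0)$. Because $\pi_1(M)$ is torsion-free and $a_i\ne 1$ (no homotopically trivial components), a rank-$2$ kernel would force $a_i=1$, and $q_i=0$ would force $a_i$ to have finite order; both are impossible. Hence $q_i\ne 0$ and, setting $n_i:=|q_i|\ge 1$, I obtain a relation
$$g_i^{\,n_i}=a_i^{\,k_i}\qquad(k_i\in\Z)$$
for each $i$.

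Now I would build $\tilde\Phi$. The key geometric observation is that the generator $a_i\in Z(a_i)$ is represented by the loop in ${\mathcal M}^L(P,M)$ that rotates the parametrization of $P_i$ once around itself: this loop keeps the image framed link fixed, so it passes through no singular links and is framing preserving. Put $a:=\prod_j n_j$ (so $a\ge 1$ and $n_i\mid a$) and $c_i:=ak_i/n_i\in\Z$, and let $\rho$ be the loop that rotates each $P_i$ by $c_i$ turns, i.e.\ the element $(a_i^{\,c_i})_i$. Since $g_i^{\,a}=(g_i^{\,n_i})^{a/n_i}=a_i^{\,c_i}$, the concatenation $\beta:=\Phi^{a}\cdot\rho^{-1}$ maps to $(g_i^{\,a}a_i^{-c_i})_i=(1)_i$ and is therefore null-homotopic in ${\mathcal M}^L(P,M)$. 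A based null-homotopy is, by adjunction, precisely a map $\tilde\Phi\colon P\times D^2\to M$ with $\partial D^2=\{*\}\times S^1$ and $\partial\tilde\Phi=\beta$.

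Finally I would compute $X_{\partial\tilde\Phi}$. The quantity $X$ is additive under concatenation of loops and satisfies $X_{\Phi^{a}}=aX_\Phi$ directly from its definition as the signed sum of $f$ over the crossing changes encountered, with no appeal to any invariance property. As the rotation loop $\rho$ meets no singular links, $X_\rho=0$, whence $X_{\partial\tilde\Phi}=X_{\Phi^{a}}+X_{\rho^{-1}}=aX_\Phi$, which is the asserted relation~(9). (Moreover $\Delta{\bf f}_\beta=a\,\Delta{\bf f}_\Phi-\Delta{\bf f}_\rho={\bf 0}$ by Definitions~\ref{extend} and~\ref{framedhomo}, so $\beta$ is framing preserving; combined with Lemma~\ref{disc} this will yield $X_\Phi=0$ once $\mathbb{A}$ is torsion free.) The main obstacle, and the point requiring the most care, is the dictionary between the algebra in $\oplus_i Z(a_i)$ and the geometry of the mapping space: one must verify that the reparametrization loops genuinely represent the classes $a_i$, that they contribute nothing to $X$ and nothing to the total framing, and that the corrected loop $\beta$ is actually trivial in $\pi_1$ so that it bounds $P\times D^2$ — which is exactly where torsion-freeness of $\pi_1(M)$ and the relations $g_i^{\,n_i}=a_i^{\,k_i}$ enter.
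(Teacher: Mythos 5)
Your proposal is correct, and it follows essentially the same route as the argument the paper points to (the paper itself gives no proof here, deferring to Lemma 3.3.4 of \cite{kalin}): identify $\pi_1({\mathcal M}^L(P,M),L)$ with $\oplus_i Z(a_i)$, use inessentiality of each $\Phi_i$ together with torsion-freeness of $\pi_1(M)$ (from asphericity) and $a_i\neq 1$ to extract relations $g_i^{n_i}=a_i^{k_i}$, correct the power $\Phi^{a}$ by reparametrization loops so that it becomes null-homotopic, and cap off by a disc, with the reparametrization loops contributing nothing to $X$ or to the total framing. Your explicit bookkeeping of $a=\prod_j n_j$ and $c_i=ak_i/n_i$, and the verification that the corrected loop is framing preserving, match what is needed for the subsequent application of Lemma \ref{disc}.
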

%\smallskip

\subsection {Theorem \ref{integration} for atoroidal manifolds} 
Before we can proceed with the proof of the theorem we need two additional lemmas.

\begin{lemma} \label{homotopy} Consider  ${\Phi},{\Phi'}: S^1 \longrightarrow  {{\mathcal M}^L(P, M)}$
two self-homotopies of $L$.  Let ${\bar X}_{\Phi}$ and ${\bar X}_{\Phi'}$ denote
the contribution to $X_{\Phi}$ and ${ X}_{\Phi'}$
coming from admissible singular links around $\Phi$ and $\Phi'$,  respectively.
Suppose that ${\Phi}, {\Phi'}$ are freely homotopic as loops in  ${\mathcal  M}^L(P, M)$. Then
we have ${\bar X}_{\Phi'}={\bar X}_{\Phi}$.
Furthermore, there is a group homomorphism $\psi : \pi_1({\mathcal  M}^L(P, M), \ L) \longrightarrow {\mathbb A}$ defined by $\psi([\Phi]):={\bar X}_{\Phi}$.
\end{lemma}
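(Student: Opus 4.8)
The plan is to realize the free homotopy between $\Phi$ and $\Phi'$ as a two-parameter family of immersions and to analyze its singular locus exactly as in the proof of Lemma \ref{disc}, with an annulus now playing the role that the disc $D^2$ played there. A free homotopy of loops in ${\mathcal M}^L(P,M)$ is a map $A \longrightarrow {\mathcal M}^L(P,M)$, where $A=S^1\times [0,1]$ is an annulus, restricting to $\Phi$ and $\Phi'$ on the two boundary circles; equivalently it is a map ${\hat H}: P\times A \longrightarrow M$ each of whose restrictions $P\times\{x\}$ is a framed immersion homotopic to $L$. Since $\Phi$ and $\Phi'$ are already in general position on $\partial A$, Remark \ref{smooth} lets me perturb ${\hat H}$ relative to $\partial A$ into general position in the sense of Proposition 1.1 of \cite{ka}, and it also guarantees that the entire analysis of Lemma \ref{disc} applies to the planar surface $A$. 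The singular set $S_{{\hat H}}\subset A$ is then a graph whose interior vertices have valence one or four, and whose intersection with $\partial A$ consists precisely of the crossing-change points of $\Phi$ and of $\Phi'$.

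The heart of the argument is then the same cancellation scheme that produced equations (7) and (8). On each edge of $S_{{\hat H}}$ the invariant $f$ is constant, and one arranges, as in Lemma \ref{disc}, that the valence-one interior vertices correspond to inadmissible crossing changes while the four edges incident to a valence-four vertex form a quadruple as in relation (3). Encircling each interior vertex by a small circle, relation (3) forces the signed sum of $f$ around every valence-four circle to vanish, whereas the valence-one circles carry only inadmissible singular links and so contribute nothing to the admissible part of the total count. On the region obtained by removing the small vertex-discs, each arc of $S_{{\hat H}}$ meets the oriented boundary in two points carrying opposite resolution signs $\eps$ but equal $f$-value, so its net contribution to the boundary sum is zero. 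Feeding these two facts together exactly as in the passage from (7) to (8) shows that the admissible contributions along $\partial A$ are accounted for entirely by the (vanishing) valence-four circles.

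Since $\partial A$ consists of the circle carrying $\Phi$ and the circle carrying $\Phi'$ with reversed boundary orientation, matching the sign convention of equation (6) to this orientation (as in Lemma \ref{disc}) identifies the admissible part of the boundary sum with ${\bar X}_{\Phi}-{\bar X}_{\Phi'}$. The previous paragraph shows this quantity is zero, whence ${\bar X}_{\Phi}={\bar X}_{\Phi'}$; this is the case $\Phi'=\text{const}$ of Remark \ref{nopreserving} promoted to a comparison of two arbitrary freely homotopic loops. Because based-homotopic loops are in particular freely homotopic, $\psi([\Phi]):={\bar X}_{\Phi}$ is a well-defined function on $\pi_1({\mathcal M}^L(P,M),L)$. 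For the homomorphism property, the crossing changes encountered around a concatenation $\Phi_1*\Phi_2$ are the disjoint union of those around $\Phi_1$ and $\Phi_2$ with the same signs, so ${\bar X}_{\Phi_1*\Phi_2}={\bar X}_{\Phi_1}+{\bar X}_{\Phi_2}$, while the constant loop has no crossing changes and gives ${\bar X}=0$. As ${\mathbb A}$ is abelian, $\psi$ is a group homomorphism.

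The step I expect to be the main obstacle is the general-position bookkeeping on the annulus that guarantees the admissible contributions are exactly the terms killed by the valence-four relation (3), with the inadmissible terms cleanly isolated at the valence-one vertices. Concretely, one must verify that admissibility in the sense of Definition \ref{admissible} is constant along the edges of $S_{{\hat H}}$ and is compatible with the vertex structure, so that the admissible part of the boundary sum genuinely matches the valence-four contribution rather than leaking into the valence-one circles; this is where the structure furnished by Proposition 1.1 of \cite{ka} and the reordering-of-crossing-changes reduction already used in Lemma \ref{disc} are indispensable.
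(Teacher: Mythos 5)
Your proof is correct and follows essentially the same route as the paper: the paper packages the free homotopy into a disc in ${\mathcal M}^L(P,M)$ whose boundary loop $\Psi$ satisfies $X_\Psi=X_\Phi-X_{\Phi'}$ (citing Lemma 3.3.2 of \cite{kalin}) and then invokes Lemma \ref{disc} together with Remark \ref{nopreserving}, whereas you run the same general-position cancellation directly on the annulus via the planar-surface extension in Remark \ref{smooth}. The difference is purely one of packaging, and your treatment of well-definedness and the homomorphism property matches the paper's.
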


\begin{proof} By a slight variation of the argument in the proof of  Lemma 3.3.2 of \cite {kalin} we have the following:
There  exists a map ${\hat {\Psi}}: D^2 \longrightarrow  {{\mathcal M}^L(P, M)}$
such that if we set $\Psi:= {\hat {\Psi}}|{\partial D^2}$ then 
$\Psi: S^1 \longrightarrow  {{\mathcal M}^L(P, M)}$
is a self-homotopy of $L$ 
with $$X_{\Psi}=X_{\Phi}-X_{\Phi'}.$$
Lemma \ref{disc} and Remark \ref{nopreserving}
imply  ${\bar X}_{\Psi}=0$; thus ${\bar X}_{\Phi}={\bar X}_{\Phi'}$.

For the remaining of the claim define  $\psi : \pi_1({\mathcal  M}^L(P, M), \ L) \longrightarrow {\mathbb A}$ as follows:
Given $\alpha\in \pi_1({\mathcal  M}^L(P, M), \ L)$, let $\Phi$ is be a self-homotopy of $L$
representing $\alpha$. Define $\psi(\alpha)={\bar X}_{\Phi}$. By our earlier arguments $\psi(\alpha)$ is independent on the representative $\Phi$.
The fact that $\psi$ is a group homomorphism follows easily.
\end{proof} 

The next  lemma  is Lemma 3.2.5 in \cite{kalin}. We point out that the proof of this lemma 
uses the hypothesis that the group $\mathbb A$ in which the invariants take values is torsion free.

\begin{lemma} \label{centralizer} 
Suppose that $M$ is a $\Q$-homology
3-sphere with $\pi_2(M)=0$. Let $L: P \longrightarrow M$ be a framed link and let
$\Phi: P{\times}S^1\longrightarrow M$
be a framing preserving self-homotopy of $L$. Assume that,
for some $i=1, \ldots,  m$, we have $a_i=1$.
Set $P':= P\setminus P_i$ and $\Phi':= \Phi| P'$. If $X_{\Phi'}=0$ then $ X_{\Phi}=0$.
\end{lemma}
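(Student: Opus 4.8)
The plan is to read the loop $\Phi$ off the product decomposition $\pi_1({\mathcal M}^L(P,M),L)\cong\oplus_{k=1}^m Z(a_k)$ of Lemma \ref{centralizerI} and to show that the summand attached to the homotopically trivial component is invisible to the invariant. Since $a_i=1$ we have $Z(a_i)=Z(1)=\pi_1(M)$. First I would show that the homomorphism $\psi$ of Lemma \ref{homotopy} vanishes on this summand: a generator is represented by a self-homotopy that moves only the $i$-th component, and because $L(P_i)$ is null-homotopic I can shrink it into a small ball $B$ and realize the prescribed class $\delta\in\pi_1(M)$ by dragging this tiny circle once around an embedded loop whose tubular neighborhood is disjoint from the rest of $L$ (general position in the $3$-manifold $M$). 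Such a motion stays embedded and disjoint from everything, so it meets no singular links and contributes $\bar X=0$. Hence $\bar X_\Phi$ depends only on the image of $[\Phi]$ in the complementary factor $\oplus_{k\neq i}Z(a_k)$.

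Next I would choose a representative $\Phi_\rho$ of that complementary class that keeps $L(P_i)$ fixed as a standard unknot $U\subset B$ throughout, and then homotope $\Phi_\rho$ rel its basepoint slice into $M\setminus B$; this is possible because the inclusion $M\setminus B\hookrightarrow M$ is $2$-connected while $P'\times S^1$ is only $2$-dimensional, and by Lemma \ref{homotopy} it leaves $\bar X$ unchanged. After this move component $i$ is an isolated spectator, every singular link occurring along $\Phi_\rho$ has the form $\phi'\sqcup U$ with $\phi'$ supported on $P'$, and $\Phi_\rho|P'$ is a framing preserving self-homotopy of $L'=L|P'$ in the sense of Definitions \ref{extend} and \ref{framedhomo}. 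Thus $\bar X_\Phi$ is computed by essentially the same signed crossing-change sum as $X_{\Phi'}$, up to the presence of the spectator $U$, and I would invoke the hypothesis $X_{\Phi'}=0$ to conclude that the admissible contribution $\bar X_\Phi$ vanishes.

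Finally I would pass from $\bar X_\Phi$ to the full quantity $X_\Phi$ by disposing of the inadmissible, framing-changing events. This is where the hypotheses that $\Phi$ is framing preserving and that $\mathbb{A}$ is torsion free are used: after iterating $\Phi$ a suitable number of times to lift it to a based loop and to standardize the kink moves, the inadmissible crossings should organize into canceling pairs of the types $L_{\times r},L_{r\times}$ and $L_{\times l},L_{l\times}$ of Remark \ref{relation2}, exactly as in the proof of Lemma \ref{disc}, so relation (2) kills their net contribution and torsion freeness lets me divide out the iteration factor to obtain $X_\Phi=\bar X_\Phi=0$. I expect the real obstacle to lie in the middle step: I must check that the isolated spectator unknot $U$ does not corrupt the signed sum, so that the vanishing established for $\Phi'$ genuinely transfers to $\Phi$. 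Controlling the inadmissible part in the absence of a global bounding disc, using only the framing preserving hypothesis together with the iteration trick of Lemma \ref{tori1}, is the second delicate point.
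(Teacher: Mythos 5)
First, a caveat on the comparison: the paper does not actually prove Lemma \ref{centralizer}; it imports it as Lemma 3.2.5 of \cite{kalin}, adding only the remark that the torsion freeness of $\mathbb{A}$ is used. Your architecture is the expected one: split $\pi_1({\mathcal M}^L(P,M),L)\cong\oplus_{k}Z(a_k)$ via Lemma \ref{centralizerI}, kill the $Z(a_i)=\pi_1(M)$ summand, and reduce the complementary summand to $\Phi'$. Your treatment of the first summand is a legitimate geometric alternative to the argument the paper's remark points at (namely that $\psi$ of Lemma \ref{homotopy}, restricted to $Z(a_i)=\pi_1(M)$, is a homomorphism to a torsion free abelian group that factors through the finite group $H_1(M;\Z)$ and is therefore trivial); your dragging loops give the same conclusion, the shrinking of $L_i$ into a ball being a conjugation whose two halves contribute cancelling terms. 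Your final step, pairing the inadmissible events via the framing preserving hypothesis and relation (2), is consistent with what the paper does in Lemmas \ref{disc} and \ref{tori}.

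The gap is exactly where you locate it, and it is a genuine gap rather than a routine verification. After parking $L_i$ in a ball $B$ disjoint from everything, the admissible sum for your representative $\Phi_\rho$ is $\sum_j\epsilon_j\, f(\phi'_j\sqcup L_i)$, where $\phi'_j$ runs over the singular links of $\Phi_\rho|P'$, whereas the hypothesis $X_{\Phi'}=0$ controls $\sum_j\epsilon_j\, f(\phi'_j)$. For an abstract $f$ subject only to relation (3) there is no implication between these two sums: $g(\cdot):=f(\cdot\sqcup L_i)$ is a \emph{different} singular link invariant, and the lemma's hypothesis says nothing about it. So ``invoking $X_{\Phi'}=0$'' does not close the middle step as written. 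The natural repair is to note that $g$ again satisfies relation (3) and to apply the vanishing statement for $(m-1)$-component links to $g$ rather than to $f$; this is available where the lemma is used (the induction in Theorem \ref{specialcase} quantifies over all invariants satisfying (3)), but it amounts to reading the hypothesis as $\sum_j\epsilon_j\, f(\phi'_j\sqcup L_i)=0$ rather than literally as $X_{\Phi'}=0$. Until you supply that (or some substitute), the step where the vanishing ``transfers'' across the spectator component is missing, and it is the heart of the lemma.
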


We are now ready to give the proof of Theorem \ref{integration}
in the case where $M$ is an atoroidal $\Q$-homology 3-sphere. 

\begin{theorem} \label{specialcase} Suppose that  $M$ is an atoroidal  $\Q$-homology
3-sphere with $\pi_2(M)=0$. Then the conclusion of Theorem \ref{integration}
is true for $M$.
\end{theorem}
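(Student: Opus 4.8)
The plan is to prove equation~(6), that is $X_{\Phi}=0$, for every ordering- and framing-preserving closed self-homotopy $\Phi$ of an initial link $L$ in $\mathcal{M}^L(P,M)$; by the two reductions carried out just above (independence of component orderings and restriction to framing-preserving homotopies) this is exactly what is needed. In both of the cases below I would reduce to Lemma~\ref{disc}, which already disposes of any framing-preserving self-homotopy that bounds a singular disc $P\times D^2$ in $M$, and I would split the argument according to whether $\pi_1(M)$ is finite or infinite.

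Suppose first that $\pi_1(M)$ is finite. By Lemma~\ref{centralizerI} the group $\pi_1(\mathcal{M}^L(P,M),L)\cong\oplus_{i=1}^m Z(a_i)$ is then finite, say of order $N$. Hence the $N$-fold iterate $\Phi^N$ is null-homotopic in $\mathcal{M}^L(P,M)$, so it extends to a map $\hat\Phi\colon P\times D^2\longrightarrow M$ with $\hat\Phi| P\times\partial D^2=\Phi^N$. Since $\Phi$ is framing preserving we have $\Delta\mathbf{f}_{\Phi^N}=N\,\Delta\mathbf{f}_{\Phi}=\mathbf{0}$, so $\Phi^N$ is framing preserving, and Lemma~\ref{disc} gives $X_{\Phi^N}=0$. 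As the iterate traverses $\Phi$ exactly $N$ times with the same signs we have $X_{\Phi^N}=N X_{\Phi}$, and since $\mathbb{A}$ is torsion free we conclude $X_{\Phi}=0$. (A finite fundamental group is automatically atoroidal, so this case is consistent with the hypotheses and needs no peeling of components.)

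Suppose instead that $\pi_1(M)$ is infinite. Here I would argue by induction on the number $m$ of components of $L$. If $L$ has a homotopically trivial component $P_i$, that is $a_i=1$, set $P':=P\setminus P_i$ and $\Phi':=\Phi| P'$; then $\Phi'$ is a framing-preserving self-homotopy of the $(m-1)$-component link $L|_{P'}$, so $X_{\Phi'}=0$ by the inductive hypothesis, and Lemma~\ref{centralizer} then yields $X_{\Phi}=0$. If no component of $L$ is homotopically trivial, then atoroidality of $M$ forces every restriction $\Phi| P_i\times S^1$ to be inessential, so Lemma~\ref{tori1} applies and produces a map $\tilde\Phi\colon P\times D^2\longrightarrow M$ with $X_{\partial\tilde\Phi}=a X_{\Phi}$ for some $a\in\mathbb{Z}$. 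The loop $\partial\tilde\Phi$ bounds the disc $P\times D^2$ and, by the same scaling, satisfies $\Delta\mathbf{f}_{\partial\tilde\Phi}=a\,\Delta\mathbf{f}_{\Phi}=\mathbf{0}$, hence is framing preserving; Lemma~\ref{disc} then gives $X_{\partial\tilde\Phi}=0$, so $a X_{\Phi}=0$, and torsion-freeness of $\mathbb{A}$ again forces $X_{\Phi}=0$.

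The step I expect to be the main obstacle is the infinite-$\pi_1$, no-trivial-component case, where everything rests on Lemma~\ref{tori1}: one must genuinely exploit atoroidality to pass from the merely inessential tori swept out by the components of $\Phi$ to an actual singular disc $\tilde\Phi$ filling a nonzero multiple of $\Phi$, and one must verify that this filling can be taken framing preserving (via $\Delta\mathbf{f}_{\partial\tilde\Phi}=a\,\Delta\mathbf{f}_{\Phi}$) so that Lemma~\ref{disc} is applicable. By comparison, the finite-$\pi_1$ argument and the peeling of homotopically trivial components through Lemma~\ref{centralizer} are comparatively routine, serving mainly to reduce the problem to the situation in which Lemma~\ref{tori1} is available.
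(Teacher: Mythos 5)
Your overall decomposition matches the paper's: split on whether $\pi_1(M)$ is finite or infinite, peel off homotopically trivial components via Lemma~\ref{centralizer}, and invoke Lemma~\ref{tori1} followed by Lemma~\ref{disc} when no component is homotopically trivial. Your finite-$\pi_1$ argument (pass to the $N$-fold iterate $\Phi^N$, which is null-homotopic, extend over $P\times D^2$, apply Lemma~\ref{disc}, and divide by $N$ using torsion-freeness) is a legitimate minor variant of the paper's route, which instead observes that the homomorphism $\psi$ of Lemma~\ref{homotopy} out of the finite group $\pi_1({\mathcal M}^L(P,M),L)$ into the torsion-free $\mathbb A$ must vanish.

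There is, however, a genuine gap in your infinite-$\pi_1$ case: your induction on $m$ has no valid base when \emph{every} component of $L$ is homotopically trivial, and in particular it says nothing about a homotopically trivial knot ($m=1$, $a_1=1$). In that situation your peeling step would require the statement for the empty link, which is outside the setup (the paper works with non-empty links, and Lemma~\ref{centralizer} is only a reduction to a non-empty $P'$), while Lemma~\ref{tori1} is explicitly unavailable because it assumes $L$ has no homotopically trivial components; the finite-$\pi_1$ argument also does not apply since $\pi_1({\mathcal M}^L(P,M),L)\cong\oplus_i\pi_1(M,L(p_i))$ is now infinite. The paper closes exactly this case with a separate argument: by Lemma~\ref{centralizerI} the abelianization of $\pi_1({\mathcal M}^L(P,M),L)$ is $\oplus_i H_1(M)$, which is \emph{finite} because $M$ is a $\Q$-homology sphere; the homomorphism $\psi$ of Lemma~\ref{homotopy} factors through this finite abelian group, hence is trivial into the torsion-free $\mathbb A$, giving ${\bar X}_\Phi=0$, and the framing-preserving hypothesis kills the inadmissible contributions as in the proof of Lemma~\ref{disc}. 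With that case added (and the induction rephrased, as in the paper, on the number $h(L)$ of homotopically trivial components with base cases $h(L)=0$ and $h(L)=m$), your argument goes through.
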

\begin{proof}
Let $f: {\mathcal L}^{1} \longrightarrow {\mathbb A}$ be a framed singular link 
invariant satisfying (3) of the statement of Theorem \ref{integration} and let  $\Phi : P\times S^1 \longrightarrow M$
be a  framing preserving self-homotopy of a framed link $L: P \longrightarrow M$.
We have to show that
 $$X_{\Phi}=0$$
where $X_{\Phi}$ is the signed sum of
values of $f$ around $\Phi$ defined
in (6). 

First suppose that $\pi_1(M)$ is finite. Then, by Lemma \ref{centralizerI},
$\pi_1({\mathcal  M}^L(P, M), \ L)$  is finite.  Since  ${\mathbb A}$
is torsion free the  homomorphism $\psi$ 
of  Lemma \ref{homotopy} must be the trivial one. Thus, in particular,
$ X_{\Phi}=0$.

Now  suppose that $\pi_1(M)$ is infinite.
If the link $L$ to begin with contains no homotopically trivial components, then since $M$ is atoroidal, 
Lemma \ref{tori1} applies to conclude that 
$X_{\partial\tilde{\Phi}}= a X_{{\Phi}}$, for a map ${\tilde {\Phi}}: P\times D^2 \longrightarrow M$.
By Lemma \ref{disc}, $X_{\partial\tilde{\Phi}}= a X_{{\Phi}}=0$
and thus, since ${\mathbb A}$ is torsion free, $ X_{{\Phi}}=0$.

Next suppose that all the components of $L$ are homotopically trivial; that is $a_i=1$, for $i=1, \ldots, m$. Then, by Lemma \ref{centralizerI},
$$\pi_1({\mathcal  M}^L(P, M), L) \cong \oplus_i^{m}\pi_1(M, L(p_i)).$$
Since $H_1(M)$ is finite the above equality implies that the abelianization of the group
$\pi_1({\mathcal  M}^L(P, M), L)$ is a finite group.
By Lemma \ref{homotopy} we have a homomorphism
$\psi : \pi_1({\mathcal  M}^L(P, M),  L) \longrightarrow {\mathbb A}$ with
$\psi([\Phi ])=X_{\Phi}$. Since ${\mathbb A}$ is abelian
$\psi$ factors through the abelianization of $ \pi_1({\mathcal  M}^L(P, M), L)$; a
finite group. 
But since ${\mathbb A}$ is torsion free $\psi$ is the trivial homomorphism. Thus
$X_{\Phi}=0$.

To handle the general case let $h(L)$ denote the number of components of $L$ that are homotopically trivial.
The proof is by induction on $h(L)$. In the light of our discussion above, the conclusion is true if $h(L)= 0$ or $h(L)=m$.
Thus we may assume that $h(L)\neq 0, m$. 
Let $L_i\subset L$ be a component that is homotopically trivial and let $L': =L\setminus L_i$.
Also let $\Phi$ be a self-homotopy of $L$ and let $\Phi'$ denote the restriction of $\Phi$ on $P'$,
where $P':= P\setminus P_i$. Since $h(L')<h(L)$, by induction, 
 $X_{\Phi'}=0$. Then, by  Lemma  \ref{centralizer},  $X_{\Phi}=0$.
\end{proof}
%\smallskip

\section{Integration of invariants in toroidal 3-manifolds}
To study the question of integrability of singular link invariants
in  toroidal 3-manifolds we need 
several results from the theory of  the characteristic submanifold of Jaco-Shalen \cite{jaco-shalen} and Johannson \cite{joh}. The statements 
of the results from these theories, in the form needed in our setting, are summarized in Section 2 of \cite{ka} and in Section 2 of \cite{kalin}. It will be convenient for us to recall the statements we need below
from therein, instead from the original references. In particular we will need the
Enclosing Theorem and the Torus Theorem both stated on pp. 679 of \cite{ka}. The later, in the form needed for our purposes,  follows from work of Scott, Casson-Jungreis and Gabai.

\begin{theorem}\label{integration2} Let  $M$ be a ${\mathbb Z}$-homology 3-sphere
with $\pi_2(M)=0$ and let ${\mathbb A}$ be a torsion free abelian group. Suppose that a map $f: {\mathcal L}^{(1)} \longrightarrow {\mathbb A}$ satisfies (3) of Theorem \ref{integration}. Then there exists a framed link invariant $F$ such that $f$ is derived from $F$ via equation (1).
\end{theorem}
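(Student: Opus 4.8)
The plan is to run the reduction already carried out in the proof of Theorem~\ref{specialcase} verbatim and then to supply a single new ingredient that disposes of essential tori; this is the only place where toroidality, and hence the distinction between the present hypotheses and those of Theorem~\ref{integration}, actually enters. As in Section~2 I would first pass to ordered components (legitimate because $\mathbb A$ is torsion free) and then restrict to framing preserving self-homotopies, so that it suffices to prove $X_\Phi=0$ for a framing preserving self-homotopy $\Phi$ of an ordered framed link $L\colon P\to M$. When $\pi_1(M)$ is finite the homomorphism $\psi$ of Lemma~\ref{homotopy} takes values in a torsion-free group and is therefore trivial, giving $X_\Phi=0$; and the induction on the number $h(L)$ of homotopically trivial components via Lemma~\ref{centralizer} reduces the general case to the case in which $L$ has no homotopically trivial components and $\pi_1(M)$ is infinite. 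I would then concentrate on this remaining case.

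Here I would invoke the centralizer decomposition $\pi_1(\mathcal M^L(P,M),L)\cong\oplus_{i=1}^m Z(a_i)$ of Lemma~\ref{centralizerI}, together with the fact that $\psi$ is a homomorphism. Since $\Phi$ is framing preserving, the contributions of the inadmissible singular links to $X_\Phi$ cancel in pairs by relation~(2) (exactly as in the proof of Lemma~\ref{disc} and Remark~\ref{nopreserving}), so that $X_\Phi$ coincides with the admissible contribution $\bar X_\Phi=\psi([\Phi])$. It therefore suffices to check that $\psi$ annihilates a generating set of each factor $Z(a_i)$. If every restriction $\Phi|P_i\times S^1$ is inessential, then Lemma~\ref{tori1} followed by Lemma~\ref{disc} yields $X_\Phi=0$ precisely as in Theorem~\ref{specialcase}; the genuinely new phenomenon is a generator of some $Z(a_i)$ whose associated self-homotopy maps $P_i\times S^1$ into $M$ by an \emph{essential} map.

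To control such essential generators I would bring in the characteristic submanifold machinery. By the Torus Theorem an essential map $P_i\times S^1\to M$ forces $a_i$ to be carried by a Seifert fibered piece, and by the Enclosing Theorem (both recorded on p.~679 of \cite{ka}) the corresponding vertical torus may be isotoped into the characteristic submanifold, where its class is generated by a root of $a_i$ together with the Seifert fiber class $h_i$. The root-of-$a_i$ generator produces an inessential self-homotopy already handled above, so the entire computation reduces to evaluating $\psi$ on the fiber slide $h_i$, namely the self-homotopy that drags $L_i$ once around the Seifert fiber while the remaining components stay fixed. The crux, and the point at which the $\mathbb Z$-homology sphere hypothesis is indispensable, is that because $H_1(M)=0$ the fiber $h_i$ is null-homologous and hence bounds an embedded orientable surface $F\subset M$; using $F$ I would extend the fiber slide over $P_i\times F$, arrange $\Phi|P_i\times S^1$ to be an embedding, and note that the stationary components $P'=P\setminus P_i$ extend trivially over a disc, so that Lemma~\ref{disc1}, in the planar-surface form permitted by Remark~\ref{smooth}, gives $X_{\Phi_{h_i}}=\psi(h_i)=0$. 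Over a mere $\mathbb Q$-homology sphere the fiber is only rationally null-homologous and this surface extension fails, which is exactly why the atoroidal restriction is present in Theorem~\ref{integration} but can be removed here.

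Assembling these vanishing statements over all factors $Z(a_i)$ forces $\psi\equiv 0$, whence $X_\Phi=0$ for every framing preserving self-homotopy; this is relation~(6), and it completes the construction of the invariant $F$ from $f$ via~(1). I expect the real difficulty to be concentrated entirely in the essential-torus step: verticalising the essential torus inside the characteristic submanifold, realising the fiber slide as an embedding on the moving component $L_i$, and controlling the admissible crossings created as $L_i$ is dragged past the other, stationary components are the delicate points, and it is there that the planar-surface refinement of Lemma~\ref{disc1} together with the integral null-homology supplied by the $\mathbb Z$-homology sphere condition must be used with care.
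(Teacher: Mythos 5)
Your reductions in the first two paragraphs (passing to ordered components, restricting to framing preserving self-homotopies, the finite $\pi_1(M)$ case via Lemma \ref{homotopy}, the induction on $h(L)$ via Lemma \ref{centralizer}, and the inessential case via Lemmas \ref{tori1} and \ref{disc}) coincide with what the paper does, and evaluating the homomorphism $\psi$ on generators of each $Z(a_i)$ is a sound idea. The gap is in the essential-torus step, which is where all the content of the theorem lies. You propose to kill the fiber slide $\Phi_{h_i}$ by capping the fiber direction with a Seifert surface $F$ of the fiber $h_i$ (which exists since $H_1(M)=0$) and extending the homotopy over $P_i\times F$. That extension does not exist in general: a map $P_i\times F\to M$ restricting to $\Phi_{h_i}$ on $P_i\times\partial F$ would force the image of $\pi_1(F)$ to centralize $a_i$ in $\pi_1(M)$, and a Seifert surface for the fiber carries no such commutation. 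Even if it did exist, $F$ is in general a positive-genus surface, whereas Remark \ref{smooth} only licenses replacing $D^2$ by a \emph{planar} surface, so Lemma \ref{disc1} does not apply. Finally, the reason you give for the $\Z$-homology sphere hypothesis (integral versus rational null-homology of the fiber) is not the operative one: over a $\Q$-homology sphere a multiple of the fiber still bounds and torsion-freeness of $\mathbb A$ would absorb the multiple, so your mechanism, if it worked, would prove too much and contradict the necessity discussed in Remark \ref{necessary}. The actual role of the hypothesis is to exclude Seifert pieces fibered over non-orientable surfaces and to put the Torus Theorem in the form ``Haken or small Seifert fibered.''

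The paper goes around the torus the other way: by Lemma \ref{essential} the essential vertical torus is extended not in the fiber direction but over $Y=p^{-1}(N(\alpha))\cong S^1\times N(\alpha)$, where $N(\alpha)$ is an oriented \emph{planar} neighborhood of the projected curve $\alpha$ on the orbit surface; the extra boundary of this extension lands on embedded tori, whose contribution is killed via Nielsen's theorem, and the case in which the longitude $[l]=a[H]+b[Q]$ has $a\neq 0$ is handled by passing to the covering of $Y$ corresponding to $a\Z\times\pi_1(F)$ and applying the planar-surface version of Lemma \ref{disc} there (Lemma \ref{tori}). You would need to replace your Seifert-surface step by this construction, or something genuinely equivalent, for the proof to go through.
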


\begin {remark}\label{necessary}  {\rm The restriction to ${\mathbb Z}$-homology 3-spheres in Theorem \ref {integration2} is necessary.
As explained in Remark 3.13 of \cite{ka} and the discussion at the end of Section 3 in \cite{kalin}, in general, local conditions are not sufficient for the integration of singular link invariants.
When the characteristic submanifold contains Seifert fibered components over non-orientable surfaces one needs to impose extra non-local conditions. Specific constructions demonstrating these phenomena are given by Kirk and Livingston in \cite{kirkliv}. The necessity of working with irreducible 3-manifolds is demonstrated by \cite{kirkliv} as well as the work of Eiserman \cite{ei}.}
\end{remark}

The proof of Theorem \ref{integration} will be completed once we have proved Theorem \ref{integration2}. For the proof of Theorem \ref{integration2} we will need the following:

\begin{lemma} \label{essential}
Let $M$ be a $\mathbb Z$-homology 3-sphere with $\pi_2(M)=0$. Suppose that  $\Phi : T=S^1\times S^1 \longrightarrow M$ is an essential map. Then there exists a map $\Psi : T \longrightarrow M$
homotopic to $\Phi$ and such that one of the following holds:
\begin{enumerate}
\item $\Psi(T)$ lies on an essential embedded torus in $M$.
\item There exists an oriented surface $F$  with $\partial F \neq \emptyset$,  and a trivial fiber bundle $Y=S^1 \times F$,
with the following property: $\Psi$ extends  to a map ${\hat {\Psi}}: Y
\longrightarrow M$ so that  the image ${\hat {\Psi}(\partial Y \setminus T})$ is contained on a collection of embedded tori in $M$.
\end{enumerate}
\end{lemma}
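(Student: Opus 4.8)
The statement to prove is Lemma \ref{essential}: for $M$ a $\mathbb{Z}$-homology 3-sphere with $\pi_2(M)=0$, any essential map $\Phi: T = S^1 \times S^1 \to M$ is homotopic to a $\Psi$ that either lies on an essential embedded torus, or extends over $S^1 \times F$ for an oriented surface $F$ with boundary, with the extra boundary mapped into embedded tori.

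This is a standard "make essential torus map nice" argument using JSJ/characteristic submanifold theory. Let me think about the structure.

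**Key tools available:**
- Torus Theorem (on p.679 of \cite{ka}): essential map of torus → either embedded essential torus OR Seifert fibered piece.
- Enclosing Theorem: essential maps of Seifert manifolds can be homotoped into the characteristic submanifold.

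**The dichotomy:** By the Torus Theorem, either (a) $M$ contains an embedded essential torus that $\Phi$ can be homotoped onto — giving case (1) — or (b) there's no such torus, meaning $\pi_1(M)$ has a special structure. Wait, but the lemma handles a single map $\Phi$, so I need to think about what happens to THIS map.

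Actually the right reading: The Torus Theorem says an essential torus map either can be homotoped onto an embedded essential torus (case 1), or $M$ is "small" / Seifert fibered in a way that the image lies in a Seifert piece. In the Seifert fibered case, the map factors through the fiber structure: the torus maps to a vertical torus (union of fibers), and the fibration $S^1 \to Y \to F$ (over a surface $F$) provides the extension. The fiber $S^1$ is the regular fiber, and $F$ is (a subsurface of) the base orbifold.

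**Plan of proof:**

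First, apply the Torus Theorem to $\Phi$. If it lands on an embedded essential torus, we're in case (1) and done.

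Second, in the remaining case, the Torus Theorem (via Scott–Casson-Jungreis–Gabai) gives that $\Phi$ can be homotoped into a Seifert fibered piece $Y_0$ of the characteristic submanifold, and moreover that the homotoped map is *vertical* (carries the torus to a union of regular fibers). This uses the Enclosing Theorem: essential annulus/torus maps into a simple manifold are homotopic into the characteristic submanifold and can be made vertical there.

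Third, once $\Psi$ is vertical in a Seifert piece $N = (\text{circle bundle over base } B)$, I extend $\Psi$ as follows. The image torus is swept out by regular fibers lying over a curve $\gamma \subset B$. I want to fill in: let $F$ be a surface in the base $B$ with $\partial F \supset \gamma$, and set $Y = S^1 \times F$. Since the Seifert fibration restricted over $F$ is a genuine $S^1$-bundle (after passing to the orientable, non-singular part — here using $\pi_2(M)=0$ and the $\mathbb{Z}$-homology sphere hypothesis to control the base orbifold), it is trivial over a surface with boundary, giving $Y \cong S^1 \times F$. The map $\hat\Psi: Y \to M$ is the Seifert inclusion. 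The new boundary components $\partial Y \setminus T$ lie over $\partial F \setminus \gamma$, which are fibered tori — these sit on the boundary tori of the Seifert piece $N$, which are embedded essential tori in $M$. That gives case (2).

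**Main obstacle.** The hard part is the third step: verifying that the Seifert fibration over a chosen subsurface $F$ is trivial (so $Y \cong S^1 \times F$, not a twisted bundle) and simultaneously arranging that the "extra" boundary $\hat\Psi(\partial Y \setminus T)$ lands on *embedded* tori rather than just immersed ones. The triviality requires choosing $F$ to avoid exceptional fibers and to be a surface with boundary (over which any circle bundle trivializes), and the $\mathbb{Z}$-homology sphere hypothesis is what rules out the problematic non-orientable-base Seifert pieces flagged in Remark \ref{necessary} — it is exactly here that the global homological hypothesis is used. I would need to check that the base orbifold of the relevant characteristic piece is orientable, so that the vertical structure extends without monodromy obstruction; this is the technical crux and the place where the argument genuinely differs from the atoroidal case of Theorem \ref{specialcase}.
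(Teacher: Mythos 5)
Your overall skeleton matches the paper's: apply the Torus Theorem to get either conclusion (1) or a homotopy of $\Phi$ to a map that is vertical with respect to a Seifert fibration (the paper splits this into the Haken case, where the Enclosing Theorem puts the map into a Seifert piece $S$ and the classification of essential tori makes it vertical, and the non-Haken case, where $M$ itself is Seifert fibered over $S^2$ with at most three exceptional fibers), and then extend the vertical torus over a trivial circle bundle. The place where your argument breaks down is the construction of $F$. You take $F$ to be a subsurface of the base $B$ with $\gamma = p(\Psi(T))$ contained in $\partial F$. But $\gamma$ is only an \emph{immersed} closed curve with transverse double points --- the vertical map factors as $\Phi_1\circ q$ with $\Phi_1$ an immersion without triple points, so its projection to the orbit surface generically self-intersects --- and a curve with double points is not a boundary component of any embedded subsurface of $B$. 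So ``let $F$ be a surface in the base with $\gamma \subset \partial F$'' has no meaning in precisely the cases that occur.

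The paper's construction differs at exactly this point: it takes a regular neighborhood $N = N(\alpha)$ of the immersed curve $\alpha = p(\Phi_1(T))$ in $B$, chosen small enough that $Y = p^{-1}(N)$ misses the exceptional fibers, so that $p: Y \to N$ is an $S^1$-bundle, trivial because $H^2(N)=0$. It then writes $[\alpha]$ as a word in free generators of $\pi_1(N)$ and builds a map of an \emph{abstract} planar surface, $\hat{\Psi} : (F, \partial F) \to (N, \partial N)$, with one boundary component realizing that word (i.e.\ mapping onto $\alpha$) and all remaining boundary components mapping into $\partial N$; pulling back the trivial bundle and its cross-section over $F$ produces the required $S^1\times F$ and the extension. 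The extra boundary tori then land on the embedded vertical tori $p^{-1}(\partial N)$ --- note the lemma only asks that these be embedded, not essential, so your identification of them with the boundary tori of the Seifert piece is also off. Your remarks about verticality, about triviality of the bundle over a surface with boundary, and about orientability of the base being where the $\mathbb{Z}$-homology sphere hypothesis enters are all on target; the missing ingredient is the mapped-in planar surface that replaces the nonexistent embedded subsurface.
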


\begin{proof}  By the Torus Theorem and the discussion at the end of Section 2 of \cite{kalin}, either $M$ is Haken or it is a Seifert fibered 3-manifold that fibers over $S^2$ with
three or less exceptional fibers. 

First suppose that $M$ is Haken. Then by the Enclosing Theorem
there is a Seifert fibered submanifold $S\subset M$ and a homotopy
$\Phi'_t: T\longrightarrow M$ such that $\Phi'_0=\Phi$ and $\Phi'_1(T)\subset S$.
If $\Phi'_1(T)$ can be further homotoped in $S$ so that it lies on a component of $\partial S$ then we have conclusion (1). Otherwise,  
by the classification of essential tori in Haken Seifert fibered spaces
(Proposition 2.11 of \cite{ka}) we can homotope $\Phi'_1$ in $S$ to a map $\Psi: T\longrightarrow S$ which is vertical with respect to the fibration.

Next suppose that $M$ is a Seifert fibered space.
By Proposition 2.2.5 of \cite{kalin},  $\Phi $ is homotopic to a map
$\Psi: T\longrightarrow M$ which is vertical with
respect to the fibration of $M$. 

Thus, in both cases, either (1) holds or  we have a Seifert fibered manifold $S\subseteq M$, with orbit space $B$ and fiber projection $p$,  such that
$\Phi$ is homotopic  to a map  $\Psi : T \longrightarrow M$
that is vertical with respect to the fibration of $S$. 
This means that $\Psi$ is a composition $\Phi_1\circ q$,
where $q$ is a covering map from the torus $T$ to itself and $\Phi_1: T \longrightarrow S$ is an immersion without triple points.
Then, there exists a decomposition
$T=S^1\times S^1$ such that

a) $\Phi_1(S^1\times \{*\})$ maps onto a regular fiber $h$ of $S$;

b) we have $p({\Phi_1}(\{*\}\times S^1))= p(\Phi_1(T))$ on the orbit surface $B$ of $S$.

Let  $H$ (resp. $Q$) denote the curve 
$S^1\times \{*\}$ (resp. $\{*\}\times S^1$) on $T$.
Now $\alpha:=p(\Phi_1(T))$ is an immersed closed curve on $B$ with singularities finitely many transverse double points.
A neighborhood  $N:=N(\alpha) \subset B$ of $\alpha$ on $B$ is an oriented  planar surface.
Choose $N$ small enough so that $Y:=p^{-1}(N)$ 
contains no exceptional fibers of $S$. Now
$p: Y\longrightarrow N$ is an $S^1$-bundle and since $H^2(N)=0$
this bundle is trivial.  Choose a trivialization $Y\cong S^1 \times N$ so that $N$ is
embedded as a cross-section.
Pick a base point $b\in N$ and arcs from $b$ to the components
of $\partial N$ whose homotopy classes freely generate $\pi_1(N)$; we pick one arc for each such component.
Assume that these arcs intersect $\alpha$ only at its double points; let $x_1, \ldots, x_s$ denote the resulting generators of $\pi_1(N, b)$. Write $\alpha$ as a word in these generators; say
$$[\alpha]= x_{i_1}^{k_1}x_{i_2}^{k_2}\cdots x_{i_r}^{k_r}.$$
We can extend the restriction $\Psi| \{*\}\times S^1$ to  a map
 ${\hat {\Psi}}: (F, \ \partial F) \longrightarrow
( N, \ \partial N)$, 
where $F$ is a planar surface, such that:
(i)
the induced map ${\hat {\Psi}}_{*}: \pi_1(F)\longrightarrow \pi_1(N)$ is
onto; 
(ii) $\pi_1(F)$ is freely generated by elements
$a^1_1,\cdots,  a^1_{k_1}, a^2_1, \cdots , a^2_{k_2} , a^r_1,\cdots,  a^r_{k_r}$; 
(iii) ${\hat {\Psi}}_{*}([Q])=x_{i_1}^{k_1}x_{i_2}^{k_2}\cdots x_{i_r}^{k_r}$ (see proof of Lemma 3.11 of \cite{ka}). We pull back the fiber bundle  structure by  ${\hat {\Psi}}$ to obtain a fiber bundle ${\hat {\Psi}}^{*}(Y)\longrightarrow F$, over $F$. The pull-back of the cross-section $\alpha$ is a cross-section of 
${\hat {\Psi}}^{*}(Y)$. Extending this cross-section over $F$, and
conclusion. 
\end{proof}

We now recall that the proof of Theorem \ref{integration2} is reduced to showing (6) for
every framing preserving  self-homotopy of $L$. Using Lemmas \ref{centralizerI}, \ref{homotopy}, and \ref{centralizer} we will see that  the general case is essentially reduced
to the case of knots. Before we continue with the proof Theorem \ref{integration2} some remarks are in order.

\begin{remark} \label{canframe}{\rm  
Let $\Phi: P\times S^1 \longrightarrow M$ denote a framing preserving  self-homotopy of a framed link $L$
and let $\Phi'$ be obtained by a free homotopy of $\Phi$ in $M$. Consider the
homotopy from $\Phi$ to $\Phi'$ as a map ${\mathcal H}: P\times S^1 \times [0,1]\longrightarrow M$.  We can smoothly approximate ${\mathcal H}$
by a homotopy in general position as in the proof of Lemma \ref{disc} (see Remark \ref{smooth}).
Then we can view $\mathcal H$ as a family of smooth framed immersions $S^1 \longrightarrow  M$
parametrized by an annulus.  We note that the  closed homotopy $\Phi'$ is not necessarily framing preserving.}
\end{remark}

\begin{remark}\label{surface} {\rm   Suppose that we have a map $\Phi: Y:=S^1 \times F\longrightarrow M$,
such that $F$ is a planar surface so that there is a component $\alpha\subset \partial F$ such that
the restriction $\Phi | S^1\times \alpha$ is a loop in $M^L(P, M)$. We can view $\Phi$ as a family of framed immersions in $M$, parametrized by $F$.
We can cut $Y:=S^1 \times F\longrightarrow M$ along a collection of properly embedded annuli (the projection of which on $F$ decomposes $F$ into a disc) into a product $S^1\times D^2$. By considering the pull back of $\Phi$ on $S^1\times D^2$ we obtain a family of framed immersions
in $M$ parametrized by $D^2$. }
\end{remark}
In the next  lemma we treat homotopies that involve essential tori. The proof treats separately  the case of knots and that of links.
In the case of knots ($m=1$ below) the proof is very similar to that of Case 1 of 
Lemma 3.3.3 in \cite{kalin}. The starting ingredient in the proof of \cite{kalin} is Lemma 3.3.2 therein. Here we replace 
that ingredient with  Lemma \ref{essential} and we outline the argument below.
 
\begin{lemma} \label{tori} Let $M$ be a  $\mathbb Z$-homology 3-sphere
with $\pi_2(M)=0$ and let $\Phi: P\times S^1 \longrightarrow M$ be a framing preserving self-homotopy of a  framed link $L$. Suppose that
$\Phi_i:= \Phi| P_i\times S^1$ is an essential map, for some $i=1, \dots, m$.  Suppose, moreover, that $\Phi_i$ cannot be homotoped so that its image
lies on an essential embedded torus in $M$.
Then we have 
$X_\Phi=0$.
\end{lemma}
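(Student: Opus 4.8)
The plan is to reduce to the case of a single essential knot component and then run the filling argument of Lemma \ref{disc}, carried out over the planar surface supplied by Lemma \ref{essential} rather than over a disc. First I would reorder the components so that $i=1$ and record that, since $\Phi_1$ cannot be homotoped onto an essential embedded torus, conclusion (1) of Lemma \ref{essential} is ruled out; hence conclusion (2) applies to $\Phi_1$, producing a planar surface $F$ with $\partial F\neq\emptyset$, a distinguished boundary circle $\alpha_0\subset\partial F$, and an extension $\hat\Psi\colon S^1\times F\longrightarrow M$ with $\hat\Psi|S^1\times\alpha_0\simeq\Phi_1$ and with $\hat\Psi(S^1\times(\partial F\setminus\alpha_0))$ lying on a collection of embedded tori. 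To pass from the link to this single-component picture (the knot case $m=1$ being the genuinely new input) I would invoke the splitting $\pi_1({\mathcal M}^L(P,M),L)\cong\oplus_i Z(a_i)$ of Lemma \ref{centralizerI} together with the fact from Lemma \ref{homotopy} that $\Phi\mapsto{\bar X}_\Phi$ is a homomorphism, so that the contribution of $\Phi$ splits into contributions of self-homotopies moving one component at a time, with homotopically trivial components absorbed by Lemma \ref{centralizer}. The remaining components are then treated as fixed obstacles: any crossing of the moving component with a fixed one is admissible and is swept up by the valence-four analysis via relation (3).

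Next I would transport the given framing across the homotopy furnished by Lemma \ref{essential}, so that $\Psi:=\hat\Psi|S^1\times\alpha_0$ is framing preserving and $X_\Phi=X_\Psi$; concretely, viewing the homotopy $\Phi\simeq\Psi$ as an annulus family in general position (Remark \ref{canframe}) and cutting the annulus along one arc into a disc, Lemma \ref{disc} applied to that disc gives $X_\Phi-X_\Psi=0$, the two arc copies cancelling. I would then put $\hat\Psi$ in general position relative to $\partial(S^1\times F)$ (Remark \ref{smooth}) and cut $F$ along arcs into a disc $D^2$, pulling $\hat\Psi$ back to a family of framed immersions parametrized by $D^2$ (Remark \ref{surface}). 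Since $D^2$ is simply connected the framing extends over it, so $\hat\Psi|\partial D^2$ is framing preserving and Lemma \ref{disc} yields $X_{\hat\Psi|\partial D^2}=0$; the cutting arcs cancel in pairs, so this reads $\sum_{j}X_{\hat\Psi|S^1\times\alpha_j}=0$, the sum being over the boundary circles of $F$.

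It remains to show that each boundary circle other than $\alpha_0$ contributes nothing. For $j\geq 1$ the loop $\hat\Psi|S^1\times\alpha_j$ is confined to an embedded torus $T_j$; choosing the Seifert neighborhood $Y=p^{-1}(N)$ of Lemma \ref{essential} small enough that the remaining link components avoid it, the moving component meets no fixed component along this sub-homotopy, and on $T_j$ it may be kept an embedded fixed-slope curve, so that no crossing changes occur and $X_{\hat\Psi|S^1\times\alpha_j}=0$. Combining, $X_\Psi=X_{\hat\Psi|S^1\times\alpha_0}=0$, and therefore $X_\Phi=0$, as desired.

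I expect the main obstacle to be the framing bookkeeping rather than the topology. Relation (3) only controls the admissible part ${\bar X}$ (Remark \ref{nopreserving}), so to upgrade the vanishing of ${\bar X}$ to the vanishing of the full $X_\Phi$ I must check at each step that the total framing $\Delta{\bf f}$ of Definition \ref{framedhomo} vanishes around the boundary of every filling used; this is exactly what licenses the pairing of the inadmissible singular links and their cancellation through relation (2), as at the end of the proof of Lemma \ref{disc}. Verifying that the framing genuinely extends over the cut-open disc, and that the transported framing on $\Psi$ agrees with the given one on $\Phi$, are the delicate points. A secondary difficulty is making the confinement argument for the torus boundary loops compatible with the other link components, which is precisely where the smallness of the Seifert neighborhood and the reduction to one-component homotopies become essential.
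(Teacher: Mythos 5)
Your overall architecture (reduce to one moving component, invoke Lemma \ref{essential}(2), fill over the planar surface $F$, and account for the remaining boundary circles lying on tori) matches the paper's, but there is a genuine gap at the central step. When Lemma \ref{essential} produces the vertical map $\Psi$ and the bundle $Y=S^1\times F$, the product decomposition $T=S^1\times S^1$ it supplies --- fiber direction $H$ and cross-section direction $Q$ --- is in general \emph{not} the original decomposition $P_1\times S^1$ into (knot)$\times$(time). You write $\hat\Psi|S^1\times\alpha_0\simeq\Phi_1$ and treat the boundary family over $\alpha_0$ as the original self-homotopy of the knot, but the family parametrized by $\alpha_0$ is a self-homotopy of the \emph{fiber curve} $H$, whereas the knot satisfies $[l]=a[H]+b[Q]$ in $\pi_1(T)$ for some $a,b\in\Z$. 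Your filling argument over $F$ therefore proves the vanishing of a signed sum attached to the wrong family of circles, and nothing in your write-up converts that into $X_\Phi=0$. The paper handles exactly this discrepancy by splitting into cases: when $a=0$ it invokes Lemma 3.12 of \cite{ka}, and when $a\neq 0$ it passes to the covering of $Y$ corresponding to $a\Z\times\pi_1(F)$, so that the lift $\tilde l$ of the knot \emph{becomes} a fiber, obtains $X_{\partial\tilde\Psi}=cX_\Psi$ for some $c\in\Z$ as in Lemma 3.14 of \cite{ka}, and only then uses torsion-freeness of $\mathbb A$ to conclude. Without this covering step (or the $a=0$ alternative) the proof does not close.

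Two secondary points. First, your framing bookkeeping is organized backwards relative to the paper and leaves the ``delicate point'' you flag unresolved: rather than trying to make each filling framing preserving (which fails --- Remark \ref{canframe} explicitly warns that a free homotopy of $\Phi$ need not preserve framing), the paper disposes of all inadmissible contributions \emph{once}, at the start, using that the original $\Phi$ is framing preserving so that $\Delta{\bf f}_\Phi={\bf 0}$ forces the inadmissible singular links along $\Phi$ to pair up and cancel via relation (2); thereafter it tracks only $\bar X$, for which Remark \ref{nopreserving} gives vanishing on disc families with no framing hypothesis. Second, your claim that each torus boundary circle $\alpha_j$, $j\ge 1$, contributes zero because the curves ``may be kept embedded with fixed slope'' is not justified: you cannot alter $\hat\Psi$ on $S^1\times\alpha_j$ without changing the boundary data of the filling, and in the paper these contributions are absorbed into the computation $X_{\partial\tilde\Psi}=cX_\Psi$ rather than argued to vanish individually. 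The $m>1$ reduction via Lemmas \ref{centralizerI}, \ref{homotopy}, \ref{centralizer} and the use of Lemma \ref{disc1} for the mixed crossings is essentially right and agrees with the paper.
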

\begin{proof} Let $m$ be the number of components of $L$. We distinguish two cases according to whether  $m=1$ or $m>1$.

\vskip 0.04in

{\underline{ We have $m=1$:}} 
Since $\Phi$ is framing preserving, relation  (2) implies that
the total contribution of the inadmissible singular links along $\Phi$ to
 $X_{\Phi}$ is zero (proof of Lemma \ref{disc}). Thus, without loss of generality, we can assume that no inadmissible crossing changes occur along $\Phi$.
Now let $\Psi: P\times S^1\longrightarrow M$ be a map that is freely homotopic to $\Phi$ in $M$.  By Lemma \ref{homotopy}, and our earlier assumption on $\Phi$,  we have
${\bar X}_{\Psi}=X_{\Phi}$.

\vskip 0.04in

Set $T:=P\times S^1$, 
$l:=P\times \{*\}$ and $m:=\{*\}\times S^1$.  By assumption $\Phi| P\times S^1\longrightarrow M$ is an essential map
and it cannot be homotoped so that its image
lies on an essential embedded torus in $M$.
By Lemma \ref{essential} we can homotope $\Phi$ to a map
$\Psi: P\times S^1 \longrightarrow M$ so that:
There is a trivial fiber bundle $Y= S^1\times F$, over a planar surface $F$,
such that $\Psi$ extends to a map ${\hat {\Psi}}:  S^1\times F
\longrightarrow M$ and  the image ${\hat {\Psi}}{(\partial Y\setminus T})$ is contained on a collection
of embedded tori in $M$. 
Let $H$ denote a simple closed curve $T$ representing
a fiber of $Y$ and  let $Q$ denote the component of $\partial F$ (embedded as a cross-section of the bundle) on  $T$. In   
$\pi_1(T)$ we have $[l]=a [H] +b [Q]$, for some $a,b\in \Z$.

\vskip 0.02in

First suppose that $a=0$. Then  Lemma 3.12 of \cite{ka} (or Lemma 3.3.1 of \cite{kalin})
applies to conclude that ${\bar X}_{\Psi}=0$. By our discussion above, 
$X_{\Phi}={\bar X}_{\Psi}=0$  and the conclusion in this case follows.
\vskip 0.03in

Suppose now that $a\neq 0$.
Let $q : {\tilde Y} \longrightarrow Y$
be the covering of $Y$ corresponding to the subgroup
$a {\Z} \times  \pi_1(F)$ of $ \pi_1(Y)=\Z\times \pi_1(F)$. Lift
$l$, 
$H$, and $Q$ to curves
${\tilde {l}}$,
${\tilde {H}}$, ${\tilde Q}$, respectively,  on the torus ${\tilde T}:=q^{-1}(T)$.
Now $\tilde Y$ is a trivial fiber bundle over a surface ${\tilde F}$ with fiber $\tilde l$; we will
write $\tilde Y=\tilde l \times {\tilde F}$.  
Consider the composition ${\tilde \Psi}:={\hat \Psi} \circ q$ and its restriction on  $\tilde T\cong {\tilde l}\times {\tilde Q}$.
Since ${\tilde \Psi}({\tilde l}\times \{x\})=\Psi(l\times q(\{x\}))$, for all $x\in {\tilde Q}$,
the restriction ${\tilde \Psi}|{\tilde l}\times {\tilde Q}$ is a self-homotopy  of a framed knot; the parameter space is ${\tilde Q}$. 
As in Remark \ref{surface} we will
think of ${\tilde \Psi}$ as a family  of framed immersions parametrized by a disc $D^2$. Then we can consider $X_{\partial\tilde{\Psi}}$.
As in the proof of Lemma 3.14 of \cite{ka} we obtain that $X_{\partial\tilde{\Psi}}=c X_{\Psi}$, for some $c\in{\Z}$.
Since, as  discussed at the beginning of this proof we have ${{\bar  X}_{\Psi}}= X_{{\Phi}}$,
it follows that ${\bar X}_{\partial\tilde{\Psi}}=c X_{{\Phi}}$.
By Remark \ref{nopreserving}, we have ${\bar X}_{\partial\tilde{\Psi}}=0$.
Hence we conclude that
we have $c X_{{\Phi}}=0$ for some $c\in \Z$.
Since $\mathbb A$ is torsion free
this implies that $ X_{{\Phi}}=0$; finishing thereby the proof of the Lemma in the case $m=1$.
%We will let
%$X_{{{\tilde \Psi}}}^{{\tilde l},{\tilde \alpha}}$ denote the quantity associated to this homotopy as in (6).  As in Remark \ref{surface} we will
%think of ${\tilde \Psi}$ as family  of framed immersions parametrized by a disc $D^2$. Then we can consider $X_{\partial\tilde{\Psi}}$.
%The contributions in $X_{\partial\tilde{\Psi}}$ come from singular knots that occur along parts  of ${\partial\tilde{\Psi}}$ that come from $\alpha$,
%parts that come from the remaining components of $\partial{{\tilde F}}$ and parts that come from the cutting arcs. Since ${\tilde \Psi}$ maps reach component of $\partial {\tilde Y}\setminus {\tilde T}$
%on an embedded torus in $M$, one can see that
%$$X_{\partial\tilde{\Psi}}=X_{{{\tilde \Psi}}}^{{\tilde l},{\tilde \alpha}}.$$
%As in the proof of Lemma 3.14 of \cite{ka} we obtain that $X_{{{\tilde \Psi}}}^{{\tilde l},{\tilde \alpha}}=a X_{\Psi}$, for some $a\in{\Z}$.
%Then Lemma \ref{disc} , implies that
%we have $ a X_{{\Phi}}=0$ for some $a\in \Z$.
%Since $\mathbb A$ is torsion free
%this implies that $ X_{{\Psi}}=0$. 
\vskip 0.05in

{\underline{ We have $m>1$.}} By Lemma \ref{centralizerI}, $\pi_1({\mathcal  M}^L(P, M), L)$ is isomorphic to a direct
product of the groups $\pi_1({\mathcal  M}^L(P_i, M), L_i)$ for  $i=1,\ldots, m$.  By Lemma \ref{homotopy}
it is enough to verify (6) only for homotopies $\Phi$ that are fixed on all but one component of $L$.
To that end, let $\Psi$ be a homotopy in general position that only moves one component; say $L_1$.
Suppose, without loss of generality, that
$\Psi| P_1\times S^1\longrightarrow M$ is an essential map that cannot be homotoped so that its image
lies on an essential embedded torus in $M$.
By (3), we may decompose $\Psi$ into two homotopies
$\Psi_1$ and $ \Psi_2$ such that
during $\Psi_1$ we only have self-crossing changes on $L_1$, while during $ \Psi_2$ we only have crossing changes
between $L_1$ and the rest of the components. The argument of Case 1 applies to $\Psi_1$ to conclude that  $X_{\Psi_1}=0$.
Since the restriction of $\Psi_2$ on $P'\times S^1$, where $P'=P\setminus P_1$, is constant;
it extends to a map $P'\times D^2 \longrightarrow M$. Then by Lemma \ref{disc1} we have
$X_{\Psi_2}=0$. 
\end{proof}

\subsection{The completion of the proof of Theorem \ref{integration2}}
Let $\Phi$ be a framing preserving loop in $M^L(P, M)$. Suppose that $\Phi| P_i\times S^1\longrightarrow M$ 
represents an essential torus for some  $i=1, \ldots, m$.  First suppose that some component, say $\Phi_i:= \Phi| P_i\times S^1\longrightarrow M$,
can be homotoped to lie on an embedded essential torus in $M$. Then a theorem of Nielsen
(\cite{hempel}, theorem 13.1) implies that after further homotopy,
we may assume that $\Phi_i$ is a covering map of an embedded torus.
It follows that the contribution of  $\Phi_i$  to $X_{\Phi}$ is zero.
Thus, for our purposes, we can assume that if
$\Phi_i$ induces an injection on $\pi_1$ then it cannot be homotoped to lie on an embedded torus.
Then by Lemma \ref{tori} we obtain $X_{\Phi}=0$.

As in the proof of Lemma \ref{tori} we may assume that $\Phi$ fixes all but one component of $L$; say $L_1$.
If $\Phi: P_1\times S^1\longrightarrow M$ is inessential  the argument in the proof of  Theorem  \ref{specialcase} applies to 
conclude that $X_{\Phi}=0$.
Assume that
$\Phi: P_1\times S^1\longrightarrow M$ is essential. Then $X_{\Phi}=0$ by Lemma  \ref{tori}.
 
\section{Kauffman power series } 
\subsection{Links in oriented $\Q$-homology 3-spheres} For framed links in $S^3$  the Kauffman polynomial is equivalent to a sequence of 1-variable Laurent polynomials ${\{R_n=R_n(t)\}}_{n\in \bf Z}$ determined by relations:
$$R_n(U)=1$$
$$R_n(L_r)=t^{{{{-(n+1)}}}} R_n(L)$$
$$R_n(L_l)=t^{{{{(n+1)}}}} R_n(L)$$
$$R_n({L_{+}})-R_n({L_{-}})=(t-t^{-{1}}) [ R_n(L_{o})-R_{n}(L_{\infty})]$$
where $L_{+}$, $L_{-}$, $L_o$, $L_{\infty}$ are as in Figure 1 and $L_r, L_l$ are as in Figure 2.
Notice that the initial value $R_n(U)=1$ is just a normalization. Any choice of the initial value
together with the rest of the relations will determine a unique $R_n$. Set $$u_n(t):= {{t^{n+1}-t^{-{(n+1)}}}\over {t-t^ {-1}}}+1.  \eqno(10)$$ By the relations above one obtains 
$R_n(L\sqcup U) = u_n(t)\  R_n(L) $,
where the link $L\sqcup U$ is obtained from $L$ by adding an unknotted and
unlinked component $U$. The coefficients of the power series $R_n(x)$ obtained from $R_n(t)$
by substituting $t=e^ x$ are invariants of finite type \cite{bn, bl}.
In the theorem below we reverse this procedure and guided by the axioms above we will 
construct power series invariants generalizing the
$R_n(x)$'s: Suppose that  $M$
is a ${\mathbb Q}$-homology sphere
with $\pi_2(M)=0$ and such that if $H_1(M)\neq 0$ then
$M$ is  atoroidal. For every $n\in \Z$ we will construct a sequence of framed link invariants
$\{ v_n^m |  m\in \N\}$
such that the formal power series
$$ R_{\{ M, n\}}= \sum_{m=0}^{\infty}v_n^m x^m$$
satisfy the axioms above under the change of variable $t={e}^x$: We will construct our invariants inductively (induction on $m$) by using Theorem \ref{integration}. Each $v_n^m$ 
is going
to be obtained by integrating a suitable singular link invariant
determined by the $v_n^j$'s with $j<m$. Although the resulting invariants will be invariants of unoriented  framed links,
for their construction we need to work with oriented links. The reason is that Theorem \ref{integration} applies to  oriented framed  links.
Recall that ${\mathcal L}$ (resp. ${\bar {\mathcal L}}$) denotes the set of isotopy classes of framed  oriented (resp. unoriented) links in $M$.
Also recall the set of oriented initial links
${\mathcal C}{\mathcal L}:= {\mathfrak o}^{-1}(\mathcal C \mathcal L^* \cup \{U\})$, defined in the beginning of subsection $\S 2.3$.
By Theorem \ref{integration} and its proof  the invariant $v_n^m$  is unique once the values on the set ${\mathcal C}{\mathcal L}$ are specified.

\begin{theorem}\label{main1}Assume that $M$ is a $\Q$-homology 3-sphere with $\pi_2(M)=0$
and such that if $H_1(M, {\mathbb Z})\neq 0$ then $M$ is  atoroidal. Fix $n\in \Z$. Given maps ${\mathcal V}_n^m: \mathcal C{\mathcal L}^*\cup\{ U\} \longrightarrow {\mathbb C}$, $m\in \N$,
there exists a unique sequence of complex valued 
link invariants $\{v_n^m | m\in \N\}$ with the following properties:
\begin{enumerate}
\item $v_n^m(CL)={\mathcal V}_n^m(\mathfrak o(CL))$ for all $CL\in {\mathcal C \mathcal L}$ and $m\in {\mathbb N}$.

\item  $v_n^m(L)=v_n^m({\mathfrak o}(L))$ for all $L\in {\mathcal L}$ and $m\in {\mathbb N}$. Thus the values of the invariants are independent of the link orientation.
\item  If we define a formal power series
$$R_n:= R_{\{ M, n\}}(L)= \sum_{m=0}^{\infty}v_n^m(L)x^m$$ then we have
$$R_n(U)=1\eqno (11)$$
$$R_n(L_r)=t^{{{{-(n+1)}}}} R_n(L)\eqno (12)$$
$$R_n(L_l)=t^{{{{(n+1)}}}}R_n(L)\eqno (13)$$
$$R_n({L_{+}})-R_n({L_{-}})=(t-t^{-{1}}) [ R_n(L_{o})-R_{n}(L_{\infty})]
\eqno (14)$$
where ${\displaystyle t:={e}^x=1+x+{{x^2}\over {2}}+  \dots}$.
\end{enumerate}
\end{theorem}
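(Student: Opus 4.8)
The plan is to construct the invariants $v_n^m$ by induction on $m$, producing each $v_n^m$ by integrating a suitable framed singular link invariant $f_m\colon {\mathcal L}^{(1)}\to {\mathbb C}$ through Theorem \ref{integration}: the manifold hypotheses assumed here are exactly those that theorem requires, and ${\mathbb C}$ is torsion free. Throughout I write $R_n^{<m}:=\sum_{j<m}v_n^j x^j$ for the truncation built from the previously constructed invariants, and carry as inductive hypothesis that $v_n^0,\dots,v_n^{m-1}$ are well-defined orientation-independent invariants agreeing with the prescribed data on ${\mathcal C}{\mathcal L}$, and that $R_n^{<m}$ satisfies relations (11)--(14) modulo $x^m$. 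The base case $m=0$ is handled by noting that $f_0\equiv 0$, since both $t-t^{-1}$ and $t^{\pm(n+1)}-1$ vanish at $x=0$; integrating the zero invariant yields a homotopy-class invariant $v_n^0$ pinned down by the initial values.

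For the inductive step I would define $f_m$ by resolving the unique double point of a singular link $L_\times$ according to whether it is admissible or inadmissible. If $L_\times$ is admissible, I set $f_m(L_\times)$ equal to the coefficient of $x^m$ in $(t-t^{-1})\,[R_n^{<m}(L_o)-R_n^{<m}(L_\infty)]$, where $L_o,L_\infty$ are the two smoothings of the double point. If $L_\times$ is inadmissible, its two resolutions differ by a full twist, so by (12)--(13) one has $R_n(L_+)=t^{\pm(n+1)}R_n(L_-)$ for the sign prescribed by the twist, and I set $f_m(L_\times)$ equal to the coefficient of $x^m$ in $(t^{\pm(n+1)}-1)\,R_n^{<m}(L_-)$. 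Because $t-t^{-1}$ and $t^{\pm(n+1)}-1$ are $O(x)$, the coefficient of $x^m$ in each case involves only $v_n^j$ with $j\le m-1$; hence $f_m$ is a well-defined isotopy invariant of singular links, manufactured precisely so that any integral $v_n^m$ reproduces the order-$m$ parts of (12)--(14).

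The heart of the argument, and the step I expect to be the main obstacle, is verifying that $f_m$ satisfies the global hypothesis (3) of Theorem \ref{integration} for every $L_{\times\times}\in{\mathcal L}^{(2)}$. I would split this into three cases by the admissibility types of the two double points. When both are inadmissible, (3) is automatic: $f_m$ is an honest singular link invariant, so relation (2) of Remark \ref{relation2} applies verbatim. When both are admissible, I would expand each of the four terms of (3) by the definition of $f_m$ at its double point and then apply the inductive form of (14) at the remaining crossing; since the correction terms are $O(x^m)$ and get multiplied by $t-t^{-1}=O(x)$, both sides of (3) collapse to the coefficient of $x^m$ in $(t-t^{-1})^2$ times the symmetric four-term combination $R_n^{<m}(L_{oo})-R_n^{<m}(L_{o\infty})-R_n^{<m}(L_{\infty o})+R_n^{<m}(L_{\infty\infty})$, and so agree. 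The mixed case runs identically: applying the framing relation at the inadmissible point (whose twist factor $t^{\pm(n+1)}$ is unaffected by the resolution in the other, disjoint ball) and the skein relation at the admissible point reduces both sides of (3) to the coefficient of $x^m$ in $(t-t^{-1})(t^{\pm(n+1)}-1)\,[R_n^{<m}(L_o)-R_n^{<m}(L_\infty)]$. The careful bookkeeping of the orientation and twist conventions in these reductions is the technical cost, but the underlying mechanism is just the order-independence of resolving two disjoint local pictures.

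Granting (3), Theorem \ref{integration} produces a framed link invariant $F_m$ with $f_m(L_\times)=F_m(L_+)-F_m(L_-)$, unique once its values on ${\mathcal C}{\mathcal L}$ are fixed; I set $v_n^m:=F_m$ with those values prescribed through ${\mathcal V}_n^m\circ\mathfrak o$, the value on $U$ being the normalization forcing $R_n(U)=1$. Unwinding the integration equation against the definition of $f_m$ then shows that $R_n^{\le m}$ satisfies (11)--(14) modulo $x^{m+1}$, closing the induction and giving property (3) of the theorem in the limit. Finally, uniqueness and orientation independence I would obtain together: at each order $f_m$ is forced by the lower invariants and $v_n^m$ is then forced up to the prescribed initial data by the uniqueness clause of Theorem \ref{integration}, giving property (1) and uniqueness; property (2) then follows by applying this uniqueness to the reoriented invariant $L\mapsto v_n^m(\sigma L)$, which satisfies the same relations (the Kauffman relation being invariant under $L_+\leftrightarrow L_-$, $L_o\leftrightarrow L_\infty$, and the framing being orientation-insensitive) with the same data prescribed through $\mathfrak o$.
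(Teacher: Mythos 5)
Your overall architecture is the paper's: induct on the order $m$, define the order-$m$ singular link invariant from the lower-order data via the Kauffman relation, verify hypothesis (3) of Theorem \ref{integration} by the symmetric double-resolution computation, integrate, and get uniqueness and orientation-independence from the uniqueness clause. The admissible-case verification of (3) you outline is exactly the paper's computation. But your treatment of inadmissible double points contains a genuine error and leaves a gap. The two resolutions of an inadmissible double point are a positive and a negative curl, i.e.\ $L_r$ and $L_l$, whose framings differ by \emph{two}, not one (this is recorded explicitly in Definition \ref{extend}); the correct difference of values is the coefficient of $x^m$ in $(t^{\mp(n+1)}-t^{\pm(n+1)})R_n^{<m}(L)$, not in $(t^{\pm(n+1)}-1)R_n^{<m}(L_-)$. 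With your formula the integrated invariant would satisfy $R_n(L_r)=t^{\pm(n+1)}R_n(L_l)$ rather than $R_n(L_r)=t^{-2(n+1)}R_n(L_l)$, so (12)--(13) would fail. Moreover, because you define $f_m$ at inadmissible points by the framing relation instead of the skein relation, relation (14) at a kink crossing is no longer ``manufactured in'': you must check that $(t^{\mp(n+1)}-t^{\pm(n+1)})R_n^{<m}(L)$ agrees mod $x^{m+1}$ with $(t-t^{-1})\bigl[R_n^{<m}(L)-R_n^{<m}(L\sqcup U)\bigr]$ (the two smoothings of a kink being $L$ and $L\sqcup U$), which requires carrying $R_n^{<m}(L\sqcup U)\equiv u_n(t)\,R_n^{<m}(L)$ through the induction. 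The paper sidesteps both points by using the skein-relation formula (20) uniformly for \emph{all} singular links and verifying (12)--(13) a posteriori.

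The second substantive gap is the anchoring. Since the framing integer changes by $\pm 2$ at an inadmissible double point and not at all at admissible ones, $L_r$ is \emph{not} framed-homotopic to $L$: each unframed isotopy class splits into several components of the space of framed immersions, and the integration formula (5) only propagates values within one component. Prescribing $v_n^m$ on ${\mathcal C}{\mathcal L}$ alone therefore leaves the invariant undetermined on links whose framing lies in the other parity class. The paper fills this in by first \emph{defining} $R_n(CL({\bf f}))=t^{(n+1)\tau}R_n(CL)$ for every framing ${\bf f}$ of every initial link and integrating from each of these base points; even then it must still verify (the check of (18) at the end of its proof, by induction on the number of crossing changes needed to reach an initial link) that the resulting values satisfy (12)--(13) for arbitrary $L$ --- this is not automatic from the construction, contrary to what your last paragraph suggests. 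Finally, your ``both inadmissible'' case of (3) cannot be dismissed verbatim by Remark \ref{relation2} when the two kinks lie on different components, since there $L_{\times r}$ and $L_{r\times}$ are not isotopic singular links; however, the same expansion you perform in the mixed case settles that configuration.
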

\begin{proof}  Define
$v_n^m(CL)={\mathcal V}_n^m(\mathfrak o(CL))$ for all $CL\in {\mathcal C \mathcal L}$
and $m\in {\mathbb N}$.
Now  we can form the power series
$R_n(CL)$. 
Guided by (12)-(13) we define
$$R_n(CL _r)=t^{{{{-(n+1)}}}} R_n(CL) \  {\rm and}\   R_n(CL_l)=t^{{{{(n+1)}}}}R_n(CL).\eqno(15)$$
Now guided by these we can define the values of $R_n$
on all framed links whose underlying unframed isotopy class is $CL$.
To explain this suppose that  $CL$ has $s$ components.
Let $CL({\bf f})$ be a framed link in the same (unframed) isotopy class with $CL$  
with framing unordered  sequence ${\bf f}$  (see Definition \ref{total} and preceding discussion).
Then define
$$R_n(CL({\bf f}))=t^{(n+1)\tau}R_n(CL), $$
where ${\tau}:={\tau}(CL({\bf f}))$ is the total framing of $CL({\bf f})$.
Using (14)-(15), and inducting on $k$, we can check that
$$ R_{n}(CL\sqcup U^k) =[ u_n(t)]^{k-1}\  R_n(CL)\eqno(16)$$
where $u_n(t)$ is given by (10). Now $R_n$ has been defined on all framed links in the unframed isotopy classes of the links in $\mathcal C \mathcal L$.

To continue for every  $L({\bf f})\in {\mathcal L}$ with framing sequence ${\bf f}$  we define 
$$v_n^0(L({\bf{f}}))=v_n^0(CL({\bf f})),$$
where
$CL$ is the initial link homotopic to $L$.
Inductively, suppose that the invariants
$v_n^0, v_n^1, \dots, v_{n}^{m-1}$
have been defined such that if we let
$$R_{ n}^{(m-1)}(L):= \sum_{i=1}^{m-1}v_n^i(L)x^i,$$
then we have
$$R_n^{(m-1)}(L_r)=t^{{{{-(n+1)}}}} R^{(m-1)}_n(L)\  {\rm mod}\  x^m  \eqno (17)$$
$$R_n^{(m-1)}(L_l)=t^{{{{(n+1)}}}}R_n^{(m-1)}(L)\  {\rm mod}\  x^m \eqno (18)$$
$$ R_{n}^{(m-1)}(L\sqcup U) = u_n(t)\  R_n(L) \ {\rm mod}\   x^m \eqno (19)$$
and
$$
R_n ^{(m-1)}({L_{+}}) 
-R_n^{(m-1)}({L_{-}})
= (t-t^{-1}) [ R_n^
{(m-1)}(L_{o})-R_n^
{(m-1)}(L_{\infty})]\ {\rm mod}\  x^m.$$
Furthermore, suppose that these invariants do not depend on the orientation of the links.
The last equation   leads us to define
$$
R_n^{(m)}({L_{\times}}):=
(t-t^{-{1}})[R_n^{(m-1)}(L_{o})-R_n^
{(m-1)}(L_{\infty})]\   {\rm mod} \ x^{m+1}   \eqno(20)
$$

We want to define the invariant $v_m^n$: Recall that it is already defined on the initial links.
Next we examine the right hand side of (20).
It  is a polynomial of degree $m$ such that  the coefficient of $x^m$ comes from
$$(t-t^{-{1}})[R_n^{(m-1)}(L_{o})-R_n^
{(m-1)}(L_{\infty})].$$
The expression above has no constant term and thus the coefficient
of $x^m$ depends on  the inductively well defined invariants
 $v_n^i$, $i=1$, $2$, $\ldots$,$m-1$.
Thus the coefficient of $x^m$ in (20) is  a ``new"  framed singular link invariant.
We are going to prove that it is derived from a framed  link invariant
by using Theorem \ref{integration}. For that we need to check  that  condition (3) in the statement \ref{integration} is satisfied. It is enough to check it
modulo $x^{m+1}$. In what follows the symbol ``$\equiv$" will denote calculation
modulo $x^{m+1}$.

Let $L_{\times +}$ and $L_{ \times- } \in {\mathcal L}^{(1)}$
be two singular framed links as in the left hand side of (3) in the statement \ref{integration}.  From  (20) we have
{
\begin{eqnarray*}
&&{R}_n^{(m)}(L_{{\times }+})-
R_n^{(m)}(L_{{\times }-}) \equiv\cr
&\equiv&
(t-t^{-{1}})\big[R_n^{(m-1)}(L_{o +})-R_n^{(m-1)}(L_{ \infty +})\big]-\cr
&-&(t-t^{-{1}})\ \big[R_n^{(m-1)}(L_{o -})-R_n^{(m-1)}(L_{\infty -}) \big]\equiv \cr
&\equiv& 
(t-t^{-{1}})\big[R_n^{(m-1)}(L_{o +})-R_n^{(m-1)}(L_{o -}) \big] -\cr
&-&(t-t^{-{1}})\big[R_n^{(m-1)}(L_{ \infty +})-R_n^{(m-1)}(L_{\infty -})\big]\equiv \cr
&\equiv&
 {(t-t^{-1})}^2 
\big[R_n^{(m-1)}(L_{oo})-R_n^{(m-1)}(L_{o \infty})\big] -\cr
&-&{
(t-t^{-{1}})}^2\big[ R_n^{(m-1)}(L_{\infty o})-R_n^{(m-1)}(L_{\infty \infty })
\big]\equiv \cr
&\equiv&
{(t-t^{-1})}^2 
\big[R_n^{(m-1)}(L_{oo})+R_n^{(m-1)}(L_{\infty  \infty})\big] -\cr
&-&{
(t-t^{-{1}})}^2\big[ R_n^{(m-1)}(L_{\infty o})+R_n^{(m-1)}(L_{o \infty })
\big].
\end{eqnarray*}
 }
Since the result is symmetric with respect to the two double points
we deduce that
$$R_n^{(m)}(L_{{\times}+})-R_n^{(m)}(L_{{\times}-})\equiv
R_n^{(m)}(L_{+{\times}})-R_n^{(m)}(L_{-{\times}}).$$

%Before we can check condition (2) we note that
%if we start with an inadmissible framed singular link  $L^1\in {\mathcal L}^1$
%and we
%let $L$ be the link obtained by cutting off the double
%point of $L^1$ and the disc containing it we have
%\begin{eqnarray*}&& R_n^{(m)}(L^1)\equiv \cr
%& \equiv &
%(t-t^{-1}) \big[
%R_n^{(m-1)}(L\sqcup U )- R_n^{(m-1)}(L_{\infty})\big]
%\equiv \\
%&\equiv&
%(t-t^{-{1}})\big[
%(u_n(t)R_n^{(m-1)}(L) - R_n^{(m-1)}(L_{\infty})\big]\equiv\\
%&\equiv &
%(t-t^{-{1}})\big[
%(u_n(t)R_n^{(m-1)}(L) - (R_n^{(m-1)}(L)\big]\equiv\\
%&\equiv &
%(t-t^{-{1}})\big[
%u_n(t)-1\big]R_n^{(m-1)}(L) \equiv\\
%&\equiv &
%\big[t^{(n+1)}-t^{-(n+1)}\big]R_n^{(m-1)}(L). 
%\end{eqnarray*}
%Here, to obtain the third equivalence we use the fact that $L$ and $L_{\infty}$ are isotopic 
%links and thus by the induction hypothesis  we have
%$$R_n^{(m-1)}(L) \equiv R_n^{(m-1)}(L_{\infty}) \  {\rm mod} x^m.$$

%Next we will check (2):  Let $L_{\times r}$ and $L_{r \times } \in {\mathcal L}^{(1)}$
%that differ only locally as shown in Figure 3. 
%By (17) and (20) we have
%\begin{eqnarray*}
%&&{R}_n^{(m)}(L_{{\times }r})-
%R_n^{(m)}(L_{r{\times }}) \equiv\cr
%&\equiv&
%(t^{(n+1)} -t^{{{{-(n+1)}}}} ) R_n^{(m)}(L_{r})-(t^{{{{(n+1)}}}} -t^{{{{-(n+1)}}}} ) R_n^{(m)}(L_{r})\equiv \cr
%&\equiv& 0.
%\end{eqnarray*}
Thus the framed singular link invariant defined above
is induced by a framed  link invariant.
Recall that  we have already defined the 
values of $v_n^m$ on all framed links with unframed underlying isotopy classes in $\mathcal C \mathcal L$.
Using this values we can define a link invariant $v_n^m$ for all links in $\mathcal L$ such that if we let
$$R_{ n}^{(m)}(L)= \sum_{i=1}^{m}v_n^m(L)x^i$$
we have
$$
R_n ^{(m)}({L_{+}}) 
-R_n^{(m)}({L_{-}})= R_n^{(m)}({L_{\times}}), \eqno(21)$$
for every $L_{\times} \in {\mathcal L}^{(1)}$.
Now it is  a straightforward calculation to check that the inductive hypotheses hold 
mod $x^{m+1}$. For example let us check (18);  the others are similar. Consider a framed link $L_r$.
Keeping the kink intact in a small 3-ball, make a sequence of crossing changes
to transform $L_l$ to an initial link say $CL_l$. Over all such sequences of crossing changes, and initial links $CL_l$,
choose one that minimizes the number of the required crossing changes. Suppose, without loss of generality,
that the first crossing to be changed in that sequence is a positive crossing.
By (20) and (21) we have
$$
R_n ^{(m)}({L_{l+}}) \equiv
R_n^{(m)}({L_{l-}})
+ (t-t^{-1}) [ R_n^
{(m-1)}(L_{lo})-R_n^
{(m-1)}(L_{l\infty})]\ {\rm mod}\  x^{m+1}.$$
By (15) and induction on the number of crossing changes needed to go from $L_{l+}$ to $CL_l$
we can assume that
$$R_n^{(m)}(L_{l-})\equiv t^{{{{(n+1)}}}}R_n^{(m)}(L)$$
By (18) we have

$$R_n^{(m-1)}(L_{lo})=t^{{{{(n+1)}}}}R_n^{(m-1)}(L_o)\  {\rm mod}\  x^m, $$
and 
$$R_n^{(m-1)}(L_{l\infty})=t^{{{{(n+1)}}}}R_n^{(m-1)}(L_{\infty} )\ {\rm mod}\  x^m.$$
Combining the last four equations we have
$$
R_n ^{(m)}({L_{l+}})  \equiv t^{{{{(n+1)}}}} R_n ^{(m)}({L_{+}}),$$
as desired.
To finish the proof we need to show that $v_n^m$ is independent of the link orientation. Inductively, we assume that
$v_n^0, v_n^1, \dots, v_{n}^{m-1}$
are uniquely determined  by their values on ${\mathcal CL}$
and independent of the (singular) link orientation. We have that
$$v_n^m(L)=v_n^m(CL)+ \sum_{i=1}^s {\pm} v_n^m (L_i)$$
where $L_1, \dots, L_s\in {\mathcal L}^{(1)}$ are singular links appearing in a homotopy from $L$ to $CL$, where
$CL$ is the representative of $L$ in ${\mathcal CL}$ (compare relation (5)). Recall that we defined
$v_n^m(CL)$ to be independent of the orientation for $CL$.
The proof of Theorem \ref{integration} establishes that $v_n^m(L)$ doesn't depend on the homotopy from $L$ to $CL$ chosen.
By induction
 $v_n^0, v_n^1, \dots, v_{n}^{m-1}$ do not depend on orientations.
 It follows that $v_n^m(L)$ is unique once
 $v_n^m(CL)$ is chosen and independent on link orientation.
 \end{proof}
\vskip 0.04in

Theorem \ref{main2} stated in the Introduction is obtained from Theorem  \ref{main1} if we set $z:=it -(it)^{-1}=ie^x+ie^{-x}$ and $a:=i e^{y}$,
where $y=(n+1)x$. 
Now we derive Theorem \ref{main} stated in the Introduction.

\begin{proof}
The elements in the  set  $ \mathcal C \mathcal L^* \cup \{U\}$ are in one-to-one correspondence with a basis of $S({\hat \pi})$.
An element   $R\in {\mathfrak F}^{*}(M)$ gives rise to one in $S^{*}({\hat \pi})$ by restriction on the set 
$\mathcal C \mathcal L^* \cup \{U\}$. Thus one direction of the theorem follows. For the other direction, we note
that an element in  $S^{*}({\hat \pi})$  defines a map
${\mathcal R}_M: \mathcal C \mathcal L^* \cup \{U\} \rightarrow\hat \Lambda$. Then by Theorem
\ref{main2} there is a unique  map
 $R_M: {\bar {\mathcal L}}\rightarrow\hat \Lambda$ with properties  (1)-(3). These properties  guarantee that $R_M$ factors through the Kauffman module 
 ${\mathfrak F}(M)$ to give an element in ${\mathfrak F}^{*}(M)$
 (see Definition \ref{skein}).
\end{proof}
\vskip 0.04in

\subsection{Links in $ S^3$}Links in $S^3$ are studied via projections on a sphere $S^2 \subset S^3$.
Let $U^m$ denote the standard $m$-component unlink
and $U^m({\bf f})$ denote the one with framing $\bf f$. Every $m$-component  link projection $L\subset S^2$
is transformed to a framed unlink  by finitely many crossing changes and regular isotopy moves on
$S^2$ (i.e. isotopy using the  Reidemeister moves of type II and III only).
For a link  projection $L\subset S^2$, we define a complexity
$$s(L):=(u(L), \  c(L)), $$
as follows: $c(L)$ is the number of crossings of $L$
and
$u(L)$ is the number of admissible  crossing changes required to transform $L$ into  a diagram of a framed unlink that
that admits a type I
Reidemeister move that reduces its crossing number. 
We order the complexities lexicographically.
Let $R:=R_{S^3} : {\mathcal L}\rightarrow\hat \Lambda$ be a map  constructed as in  Theorem \ref{main2}
and recall that  $\Lambda:= {\mathbb C}[a^{\pm 1}, z^{\pm 1}]$. Note that the complexity $s(L)$ defined above has
the properties that $s(L_r), s(L_l)>s(L)$.

\begin{prop} \label{poly} Define $R(U({\bf f}))= a^{-{\tau}} (a+a^{-1}) z^{-1}+1$,
 where ${\tau}:= \sum_{i=1}^m{\bf f_i}$. Then,  $R(L)\in\Lambda$  for every link. In fact, $R(L)$ is the two variable Kauffman polynomial.
\end{prop}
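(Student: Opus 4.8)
The plan is to prove, by induction on the complexity $s(L)=(u(L),c(L))$ ordered lexicographically, that $R(L)\in\Lambda$ for every framed link diagram $L$, and then to identify the resulting invariant with the Kauffman polynomial. The starting point is that, by its construction through Theorem \ref{main2}, $R$ is the \emph{unique} map $\mathcal L\to\hat\Lambda$ obeying the curl relations $R(L_r)=aR(L)$, $R(L_l)=a^{-1}R(L)$ together with the skein relation $R(L_+)-R(L_-)=z\bigl[R(L_o)-R(L_\infty)\bigr]$, subject to the prescribed values on the initial links. Since $a,a^{-1},z,z^{-1}\in\Lambda$, every instance of these relations expresses one value of $R$ as a $\Lambda$-linear combination of values of $R$ on structurally simpler diagrams; hence it suffices to verify that the base values lie in $\Lambda$ and that a suitable reduction terminates.

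First I would treat the base case $c(L)=0$, where $L$ is a crossingless framed unlink $U^m(\mathbf f)$. Using the curl relations to absorb the framing into a factor $a^{-\tau}$ (with $\tau=\tau(L)$ as in Definition \ref{total}), together with the standard computation that resolves a disjoint unknot — apply the skein relation at a kink introduced on one component — one obtains a disjoint-unknot multiplier $\delta\in\Lambda$ and a closed formula for $R(U^m(\mathbf f))$. With the chosen value $R(U(\mathbf f))=a^{-\tau}(a+a^{-1})z^{-1}+1$, each of these base values lies in $\Lambda$ and agrees with the Kauffman polynomial of the unlink.

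For the inductive step I would split according to $u(L)$. If $u(L)=0$ but $c(L)>0$, then $L$ is an unlink diagram carrying a kink removable by a type-I Reidemeister move; stripping it via the curl relation gives $R(L)=a^{\pm1}R(L^{\flat})$, where $L^{\flat}$ has one fewer crossing. Because $s(L_r),s(L_l)>s(L)$ (the inequality recorded just before Proposition \ref{poly}), we have $s(L^{\flat})<s(L)$, so $R(L^{\flat})\in\Lambda$ by induction and hence $R(L)\in\Lambda$. If $u(L)>0$, I would apply the skein relation at an admissible crossing whose switch lowers the unknotting complexity, giving
$$R(L)=R(\widehat L)\pm z\bigl[R(L_o)-R(L_\infty)\bigr],$$
where $\widehat L$ is $L$ with that crossing switched, so that $u(\widehat L)=u(L)-1$, and $L_o,L_\infty$ are its two smoothings, each with $c(L)-1$ crossings. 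By induction the three terms on the right lie in $\Lambda$, and since $z\in\Lambda$ so does $R(L)$. Finally, the relations driving this recursion are exactly the defining skein relations of the two-variable Kauffman polynomial in the framed normalization fixed by the chosen unknot value; as these determine the polynomial uniquely, the forced value of $R(L)$ must coincide with it, which proves the second assertion.

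The step I expect to be the main obstacle is the well-foundedness of the induction, and in particular the control of the \emph{first} coordinate of $s$ under smoothing. Switching a crossing decreases $u$ while fixing $c$, and stripping a kink decreases $c$; both strictly lower $s(L)$. The two smoothings $L_o,L_\infty$, however, only visibly decrease the crossing number $c$, whereas $u$ is the primary coordinate of the lexicographic order, so one must check that a smoothing does not raise the unknotting complexity $u$. I would handle this by building $u(L)$ from a descending-diagram presentation of the unlink and verifying that the smoothing of the chosen crossing inherits a descending resolution of no larger complexity. This is the analogue of the delicate point in Kauffman's original well-definedness argument, but it is lighter here because $R$ is already a genuine invariant: \emph{any} terminating reduction that stays inside $\Lambda$ suffices, so no independent check that different reductions agree is required.
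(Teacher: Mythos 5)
Your proposal follows essentially the same route as the paper: induction on the lexicographic complexity $s(L)=(u(L),c(L))$, stripping crossing-reducing type I moves via the curl relations, resolving an admissible crossing via the skein relation so that the remaining terms have strictly smaller complexity, and invoking the uniqueness of $R$ to identify it with the Kauffman polynomial. The well-foundedness point you flag (that the smoothings $L_o,L_\infty$ must not increase $u$) is exactly the step the paper compresses into the assertion that three of $s(L_\pm),s(L_o),s(L_\infty)$ are strictly smaller than the fourth, so your treatment is, if anything, slightly more explicit about the same argument.
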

\begin{proof}
Given a diagram $L$  first perform  all type I Reidemeister moves
that reduce the number of crossings of $L$. If there are no such moves and $L$ is non-trivial 
 then there is a crossing change such that
three of the  terms  $s(L_-), s(L_o), s(L_{\infty}), s(L_+)$ 
are strictly less that
the remaining fourth one.
Thus the skein relations
$$ R(L_+)-R(L_-)=z\big[ R(L_o)-R(L_\infty)\big],$$
$$R(L_r)=aR(L)\ \  {\rm and }\ \  R(L_l)=a^{-1}R(L)$$
allow us to write the invariant $R(L)$  of every link $L$ as a finite sum of 
the invariants of links of strictly less complexity than $s(L)$ and with coefficients in $\Lambda$. 
The result follows by induction on $s(L)$ and the observation  that
$R(U({\bf f}))\in \Lambda$. The last claim in the statement of the proposition follows by the uniqueness properties of $R$.
\end{proof}

\smallskip 
\end{document}